\theoremstyle{definition}
\newtheorem{definition}{Definition}
\theoremstyle{plain}
\newtheorem{theorem}[definition]{Theorem}
\newtheorem{proposition}[definition]{Proposition}
\newtheorem{lemma}[definition]{Lemma}
\newtheorem{conjecture}[definition]{Conjecture}
\theoremstyle{remark}
\DeclareMathOperator{\Ass}{Ass}
\DeclareMathOperator{\Pic}{Pic}
\def\field{\mathbb{K}}
\def\PP{\mathbb{P}}
\def\seqmul{\overline{m}}
\let\to\longrightarrow
\let\mapsto\longmapsto
\begin{document}

\title{Symbolic powers of ideals of generic points in $\PP^3$}

\author{Marcin Dumnicki}

\dedicatory{
Institute of Mathematics, Jagiellonian University, \\
ul. \L{}ojasiewicza 6, 30-348 Krak\'ow, Poland \\
Email address: Marcin.Dumnicki@im.uj.edu.pl\\
}

\thanks{Keywords: symbolic powers, fat points.}

\subjclass{14H50; 13P10}

\begin{abstract}
B. Harbourne and C. Huneke conjectured that for any ideal $I$ of fat points in $\PP^N$
its $r$-th symbolic power $I^{(r)}$ should be contained in $M^{(N-1)r}I^r$, where
$M$ denotes the homogeneous maximal ideal in the ring of coordinates of $\PP^N$.
We show that this conjecture holds for the ideal of any number of simple (not fat) points
in general position in $\PP^3$ and for at most $N+1$ simple points in general position in $\PP^N$.
As a corollary we give a positive answer to Chudnovsky Conjecture in the case of generic points in $\PP^3$.
\end{abstract}

\maketitle

\section{Introduction}

Let $\field$ be a field of chracteristic zero, let $\field[\PP^N]=\field[x_0,\dots,x_N]$ denote the ring of coordinates of
the projective space with standard grading. Let $I \subset \field[\PP^N]$ be a homogeneous ideal.
By $m$-th symbolic power we define
$$
I^{(m)} = \field[\PP^N] \cap \big( \bigcap_{\mathfrak{p} \in \Ass(I)} I^{m} \field[\PP^N]_{\mathfrak{p}} \big),
$$
where the intersection is taken in the ring of fractions of $\field[\PP^N]$.
By a \emph{fat points ideal} we denote the ideal
$$I = \bigcap_{j=1}^{n} \mathfrak{m}_{p_j}^{m_j},$$
where $\mathfrak{m}_{p}$ denotes the ideal of forms vanishing at a point $p \in \PP^N$, and $m_j \geq 1$ are integers.
Observe that for a fat points ideal $I$ as above
$$I^{(m)} = \bigcap_{j=1}^{n} \mathfrak{m}_{p_j}^{m_j m}.$$
Let $M=(x_0,\dots,x_N) \subset \field[\PP^N]$ be the maximal homogeneous ideal.
In \cite{HaHu} Harbourne and Huneke posed the following Conjecture:

\begin{conjecture}[Harbourne--Huneke]
\label{mainconj}
Let $I$ be any fat points ideal in $\field[\PP^N]$. Then $I^{(rN)} \subset M^{r(N-1)}I^{r}$ holds for all $r \geq 1$.
\end{conjecture}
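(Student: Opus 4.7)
The conjecture looks hard in full generality, so my plan is to attack two complementary special cases: very few points ($\le N+1$) in arbitrary $\PP^N$, and arbitrarily many points but only in $\PP^3$. I expect both to appear in the paper, and jointly they cover exactly the claims announced in the abstract.

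For at most $N+1$ general points in $\PP^N$, a linear change of coordinates places the points at the coordinate vertices, making
$$
I \;=\; \bigcap_{i=0}^{N}\bigl(x_0,\ldots,\widehat{x_i},\ldots,x_N\bigr)
$$
a squarefree monomial ideal. Both $I^{(rN)}$ and $M^{(N-1)r}I^{r}$ are then monomial: the first consists of $x^{\mathbf{a}}$ with $|\mathbf{a}|-a_i\ge rN$ for all $i$, and the second admits an explicit description in terms of ``degree-sequence realizability'' of $\mathbf{a}$ as a sum of $r$ edge vectors $e_i+e_j$, $i\neq j$, plus an $M^{(N-1)r}$-factor. The containment thus reduces to a combinatorial check: given $\mathbf{a}$ with $a_i\le|\mathbf{a}|-rN$, produce a factorization $x^{\mathbf{a}}=x^{\mathbf{b}}x^{\mathbf{c}}$ with $|\mathbf{b}|\ge(N-1)r$ and $x^{\mathbf{c}}\in I^{r}$. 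A greedy distribution of the ``excess'' $|\mathbf{a}|-rN$ among the coordinates should suffice.

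For any number of simple general points in $\PP^{3}$ the symbolic powers are no longer monomial, so the plan must become geometric. The Ein--Lazarsfeld--Smith containment already gives $I^{(3r)}\subset I^{r}$, so the new content in $I^{(3r)}\subset M^{2r}I^{r}$ is a \emph{degree} statement: one must represent each $F\in I^{(3r)}$ as $\sum f_ig_i$ with $g_i\in I^{r}$ and $\deg f_i\ge 2r$. The two ingredients I expect to need are (i)~a Chudnovsky-type lower bound $\alpha(I^{(3r)})\ge r(\alpha(I)+2)$ for generic points in $\PP^{3}$, which forces $F$ to carry degree beyond what any generator of $I^r$ demands, and (ii)~a specialization of the $n$ points to a well-understood configuration (for example points on a rational normal cubic, or on the union of a few lines) where the generators of $I^{(3r)}$ and $I^{r}$ are explicit enough for the decomposition to be written down by hand. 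The inclusion for the general configuration would then follow from upper semicontinuity of $\dim_{\field}\bigl(I^{(3r)}/M^{2r}I^{r}\bigr)_{d}$ as the points move.

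The main obstacle is Step~2. The monomial case is essentially combinatorics, but in $\PP^{3}$ one has to simultaneously control a Chudnovsky-type estimate (which is itself an open conjecture for arbitrary configurations) and choose the specialization so that it is tractable enough for explicit computation yet general enough for the containment to survive the degeneration. Making these two requirements coexist uniformly in $r$ and $n$ is where I expect the bulk of the technical work to lie; everything else in the argument ultimately feeds into that balancing act.
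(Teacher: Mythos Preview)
Your plan for $n\le N+1$ points matches the paper exactly: after moving the points to coordinate vertices the problem is purely monomial, and the paper's Proposition~17 carries out precisely the greedy factorization $x^{\mathbf a}=y\cdot z$ with $y\in I^r$ and $z\in M^{(N-1)r}$ that you sketch.

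For $\PP^3$ your ingredient (i) is the right reduction, and it is the one the paper uses: via \cite[Proposition~2.3]{HaHu}, the containment $I^{(3r)}\subset M^{2r}I^r$ follows once one knows $\alpha(I^{(3r)})\ge r\cdot\operatorname{reg}(I)+2r$, and for $n$ generic points with $\binom{s}{3}<n\le\binom{s+1}{3}$ this amounts to the Waldschmidt bound $\gamma(I)\ge(s+1)/3$. Where your plan diverges is in how to \emph{obtain} that bound. Your ingredient (ii), specializing the points and invoking upper semicontinuity of $\dim\bigl(I^{(3r)}/M^{2r}I^r\bigr)_d$, is not what the paper does, and it is also where your outline is most exposed: symbolic powers do not form flat families, so neither the numerator nor the intersection with the denominator varies semicontinuously in the direction you need without further argument; and even granting that, you would still owe a proof of the containment for the specialized configuration uniformly in $r$, which is essentially the original difficulty transplanted.

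The paper avoids specialization entirely. It bounds $\gamma(I(1^{\times n}))$ directly for generic points by two devices: first, the standard Cremona transformation of $\PP^3$, which (Proposition~\ref{cremona}) moves a nonempty system $I(m_1,\dots,m_4,\overline m)_d$ to $I(m_1+k,\dots,m_4+k,\overline m)_{d+k}$ with $k=2d-\sum m_i$ and is iterated to force contradictions of the form ``degree $<$ multiplicity''; second, a gluing lemma (Theorem~\ref{glueones}) showing $\gamma(I(1^{\times 8},\overline m))\ge\gamma(I(2,\overline m))$, so that eight simple points may be traded for one double point. Combining these yields explicit lower bounds $\gamma(I(1^{\times n}))\ge 2^s\gamma(I(1^{\times r}))$ whenever $n\ge r\cdot 8^s$, together with a short list of base values (Proposition~\ref{gammas}); a finite case analysis for $5\le n\le 511$ and an asymptotic estimate for $n\ge 512$ then verify $\gamma(I)\ge(s+1)/3$ in every case. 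This Cremona-plus-gluing mechanism is the missing idea in your outline.
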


More about conjectures dealing with containment problems, i.e. showing that $I^{(j)} \subset M^{k}I^{\ell}$ for
various $j,k,\ell$ can be found in \cite{HaHu}, \cite{BoHa1} and \cite{BoHa2}.

A sequence $\seqmul=(m_1,\dots,m_n)$ of $n$ integers will be called a \emph{sequence of multiplicities}.
Define the \emph{ideal of generic fat points} in $\field[\PP^N]$ to be
$$I(\seqmul) = I(m_1,\dots,m_n) = \bigcap_{j=1}^{n} \mathfrak{m}_{p_j}^{m_j}$$
for points $p_1,\dots,p_n$ in general position in $\PP^N$ (for $m_j < 0$ we take $\mathfrak{m}_{p_j}^{m_j} = \field[\PP^N]$).
We will use the following notation:
$$m^{\times s} = \underbrace{(m,\dots,m)}_{s}.$$

In \cite[Proposition 3.10]{HaHu} Conjecture \ref{mainconj} has been verified for ideals of
generic points in $\PP^2$:

\begin{theorem}[Harbourne--Huneke]
Let $I = I(1^{\times n}) \subset \field[\PP^2]$. Then $I^{(2r)} \subset M^rI^r$.
\end{theorem}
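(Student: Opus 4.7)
The plan is to combine two classical ingredients. First, the Ein--Lazarsfeld--Smith theorem (and its algebraic counterpart of Hochster--Huneke) yields $I^{(2r)} \subset I^r$ for the radical ideal $I$ of any finite set of points in $\PP^2$. Second, Chudnovsky's inequality is known unconditionally in $\PP^2$ and gives the degree bound
\[
\alpha\bigl(I^{(2r)}\bigr) \;\geq\; r\bigl(\alpha(I)+1\bigr).
\]
Together these say: every homogeneous $F \in I^{(2r)}$ lies in $I^r$ and has $\deg F \geq r\alpha(I)+r$.

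The remaining step is to promote $F \in I^r$ to $F \in M^r I^r$. I would use the structure of the minimal free resolution of $I$: by Gaeta's theorem, for generic points in $\PP^2$ the ideal $I$ is minimally generated in degrees $\alpha(I)$ and $\alpha(I)+1$, with only linear syzygies. Consequently $I^r$ is generated by $r$-fold products of such generators, with degrees ranging from $r\alpha(I)$ to $r(\alpha(I)+1)$. Chudnovsky's bound places $\deg F$ at least $r$ degrees above the minimum $r\alpha(I)$, precisely the margin needed to absorb one factor from $M^r$ coming from a minimum-degree product.

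The main obstacle is sharpness at the boundary degree $r(\alpha(I)+1)$: here $\deg F$ can equal the top degree of a minimal product of generators of $I^r$, and a naive numerical argument fails. The key observation resolving this is that a minimal $r$-fold product of top-degree generators of $I$ vanishes only to order $r$ at each $p_j$ (each factor contributing order $1$), and therefore cannot lie in $I^{(2r)}$. Hence any $F \in I^{(2r)}$ of boundary degree must involve at least one factor of degree $\alpha(I)$, releasing a factor of degree $\geq r$ into $M^r$ within the representation. For $F$ of strictly larger degree, a routine degree-shifting (or direct Koszul syzygy) argument reduces to the boundary case, completing the proof.
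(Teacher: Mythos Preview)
The paper does not supply its own proof of this theorem; it simply cites \cite[Proposition~3.10]{HaHu}. So there is nothing in the paper to compare against directly, and I evaluate your argument on its own merits. It has a genuine gap in Step~3.

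First, the arithmetic of your ``key observation'' is off. If an $r$-fold product of generators of $I$ uses $k$ factors of degree $\alpha(I)$ and $r-k$ of degree $\alpha(I)+1$, its degree is $r(\alpha(I)+1)-k$, so in a term of total degree $t$ the leftover monomial has degree $t-r(\alpha(I)+1)+k$. For that leftover to lie in $M^{r}$ you need $k\ge r(\alpha(I)+2)-t$; at the boundary $t=r(\alpha(I)+1)$ this forces $k\ge r$, i.e.\ \emph{every} factor must have degree $\alpha(I)$, not ``at least one''. A single low-degree factor releases one unit of degree, not $r$.

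Second, and more seriously, the observation that an individual product of $r$ top-degree generators vanishes only to order $r$ says nothing about $F$. Your $F$ is a \emph{linear combination} of such products, and a linear combination can perfectly well vanish to order $2r$ at every point while each summand vanishes only to order $r$. So $F\in I^{(2r)}$ does not prevent its expression in $I^{r}$ from using top-degree-only products, and your conclusion does not follow. The ``routine degree-shifting'' for larger $t$ does not help either: the whole range $r(\alpha(I)+1)\le t<r(\alpha(I)+2)$ is problematic, and shifting down one degree stays in that range with one fewer power of $M$ to account for. The device that actually works here (cf.\ \cite[Proposition~2.3]{HaHu}, invoked later in this paper's Proposition~\ref{gamma}) is a regularity bound $\mathrm{reg}(I^{r})\le r\,\mathrm{reg}(I)$ together with $\alpha(I^{(2r)})\ge r\,\mathrm{reg}(I)+r$; since generically $\mathrm{reg}(I)=\alpha(I)+1$, bare Chudnovsky falls short by exactly $r$, and the missing estimate is precisely what Harbourne and Huneke establish separately.
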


In the paper we show that Conjecture \ref{mainconj} holds for any number of generic points in $\PP^3$ and for
at most $N+1$ generic points in $\PP^N$ for all $N \geq 2$:

\begin{theorem}
\label{thm1}
Let $I = I(1^{\times n}) \subset \field[\PP^3]$. Then $I^{(3r)} \subset M^{2r}I^r$.
\end{theorem}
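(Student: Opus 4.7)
The plan is to reduce the containment $I^{(3r)} \subseteq M^{2r} I^r$ to a Hilbert-function comparison on a carefully chosen specialization of the $n$ generic points. The containment is equivalent to the equality $\dim_\field (I^{(3r)} + M^{2r} I^r)_d = \dim_\field (M^{2r} I^r)_d$ in every degree $d$, so it suffices to control, in each degree, the graded piece of $I^{(3r)}$ that is not already captured by the product ideal $M^{2r} I^r$. Because $M^{2r} I^r$ is a product of ordinary powers of $I$ and $M$, its Hilbert function is automatically constant in any flat family of $n$-point configurations; the entire difficulty concentrates in the symbolic power $I^{(3r)}$, whose graded dimension can jump upwards at special configurations.

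The heart of the argument is to specialize the $n$ points to a combinatorially tractable configuration $t^{\ast}$ for which (a) the Hilbert function of $I^{(3r)}$ still coincides with its generic value, so that $t^{\ast}$ is \emph{admissible} for the symbolic power, and (b) the ideals $I^{(3r)}$ and $M^{2r} I^r$ admit an explicit description that makes the desired inequality checkable. Natural candidates are points lying on a smooth quadric $Q \cong \PP^1 \times \PP^1$, so that vanishing conditions become bi-graded, or points at the nodes of a three-dimensional grid, so that the ideals acquire a monomial or toric character. In each case the required inequality reduces to a combinatorial comparison of lattice-point counts, typically split into residue classes of $n$ modulo a small integer, with induction on $n$ (peeling off a hyperplane's worth of points) and on $r$, together with ad hoc treatment of small base cases such as $r=1$, where the statement $I^{(3)} \subseteq M^{2} I$ already encodes the essential combinatorics.

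The main obstacle is establishing admissibility: condition (a) above is a non-trivial Alexander--Hirschowitz-style statement for triple (and higher) fat-point ideals in $\PP^3$, and most specializations will in fact destroy the generic Hilbert function of $I^{(3r)}$. Constructing a configuration that simultaneously satisfies (a) and (b) is the delicate heart of the argument. Once both are available, the Hilbert functions of the two sides are constant on a Zariski open neighborhood $U$ of $t^{\ast}$ in $(\PP^3)^n$, and the containment verified on the fiber over $t^{\ast}$ transfers to every fiber in $U$, including the generic one. The Chudnovsky inequality for generic points in $\PP^3$ advertised in the abstract then drops out as a corollary by comparing initial degrees, $\alpha(I^{(3r)}) \geq \alpha(M^{2r} I^r) = 2r + r\alpha(I)$.
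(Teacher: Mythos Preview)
Your proposal is a sketch of a strategy rather than a proof, and the strategy itself has a genuine gap at the semicontinuity step.

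\medskip
\textbf{The semicontinuity direction is wrong.} You assert that once the Hilbert functions of $I^{(3r)}$ and $M^{2r}I^r$ are constant on an open set $U \ni t^\ast$, the containment verified at $t^\ast$ ``transfers to every fiber in $U$, including the generic one.'' This is false. On $U$ the two families are sub-bundles of the trivial bundle $R_d \times U$, and the fiberwise condition $A_t \subset B_t$ is the vanishing locus of the bundle map $A \to (R_d \times U)/B$; hence it is a \emph{closed} condition on $U$, not an open one. Containment at a single closed point $t^\ast$ only tells you the containment locus is nonempty, not that it contains the generic point. (A toy model: in $\field^2 \times \mathbb{A}^1_t$ take $A_t=\langle e_1\rangle$ and $B_t=\langle e_1+te_2\rangle$; both have constant rank $1$, $A_0\subset B_0$, but $A_t\not\subset B_t$ for $t\ne 0$.) Equivalently, in your own formulation $\dim (I^{(3r)}+M^{2r}I^r)_d$ is \emph{lower} semicontinuous on $U$, so equality with $\dim(M^{2r}I^r)_d$ at $t^\ast$ gives only the trivial inequality at the generic point.

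\medskip
\textbf{The constancy claim for $M^{2r}I^r$ is also unjustified.} You write that the Hilbert function of $M^{2r}I^r$ is ``automatically constant in any flat family of $n$-point configurations.'' Ordinary powers of point ideals are not flat in families: already for three points in $\PP^2$, $\dim(I^2)_2$ jumps from $0$ (generic) to $1$ (collinear). So at your candidate specialisations (points on a quadric, or on a grid) you would in addition need to verify that $M^{2r}I^r$ has its generic Hilbert function --- and even after doing so, the previous paragraph shows this is still not enough.

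\medskip
\textbf{The admissibility obstacle is not addressed.} You correctly flag condition (a) as an Alexander--Hirschowitz-type statement for $I((3r)^{\times n})$ and call it ``the delicate heart of the argument,'' but you do not supply it, nor indicate why either of the suggested configurations would satisfy it for all $n$ and all $r$ simultaneously.

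\medskip
\textbf{How the paper actually proceeds.} The paper avoids specialization entirely. It uses the numerical criterion of Harbourne--Huneke: if $\alpha(I^{(3r)}) \ge r\cdot\omega(I) + 2r$, where $\omega(I)$ is the maximal generating degree, then $I^{(3r)}\subset M^{2r}I^r$. For $n$ generic points with $\binom{s}{3}<n\le\binom{s+1}{3}$ one has $\omega(I)\le s-1$, so it suffices to prove $\gamma(I)\ge (s+1)/3$ for the Waldschmidt constant. These lower bounds are then produced, for all $n\ge 5$, by iterated Cremona transformations on $\PP^3$ and a ``gluing'' lemma that trades eight simple base points for one double point; a finite case analysis (Theorems \ref{p3high} and \ref{p3low}) closes the argument. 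The small cases $n\le 4$ are handled directly via monomial computations at coordinate points (Propositions \ref{funda} and \ref{nlow}).
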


\begin{theorem}
\label{thm2}
Let $I = I(1^{\times n}) \subset \field[\PP^N]$. If $n \leq N+1$ then $I^{(Nr)} \subset M^{(N-1)r}I^r$.
\end{theorem}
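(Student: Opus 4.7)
The plan is to use that any $n \le N+1$ points in general position in $\PP^N$ are projectively equivalent to a subset of the coordinate points. After such a change of coordinates, we may assume $p_i = e_{i-1}$ for $i=1,\dots,n$, so $\mathfrak{m}_{p_i}=(x_j : j \ne i-1)$. All ideals in sight are then monomial, and it suffices to check containment on a monomial generator $x^\alpha$ of $I^{(Nr)}$, characterized by $\alpha_k \le D-Nr$ for $k=0,\dots,n-1$ where $D=|\alpha|$; in particular $D \ge Nr$.

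A direct computation shows that the minimal monomial generators of $I$ are the variables $x_n,\dots,x_N$ (when $n \le N$) together with the monomials $x_ix_j$ for $0\le i<j\le n-1$. Consequently $x^\beta \in I^r$ whenever $\beta$ is a sum of $r$ \emph{basic} exponent vectors, each either $e_k$ with $k \ge n$ or $e_i+e_j$ with $0\le i<j\le n-1$. The strategy is to split $\alpha = \beta + \gamma$ with $\beta \le \alpha$ a sum of $r$ such basic vectors and $|\gamma| \ge (N-1)r$; then $x^\gamma \in M^{(N-1)r}$ and $x^\alpha \in M^{(N-1)r}I^r$.

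Set $s = \min\bigl(r,\sum_{k\ge n}\alpha_k\bigr)$. If $s=r$, take $\beta$ of degree $r$ supported in $\{n,\dots,N\}$ with $\beta_k \le \alpha_k$; then $|\gamma|=D-r\ge(N-1)r$. If $s<r$ (which forces $n\ge 2$), assign $\beta_k=\alpha_k$ for $k\ge n$ and seek $\beta_k \le \min(\alpha_k,r-s)$ for $k<n$ with $\sum_{k<n}\beta_k=2(r-s)$, so that the low-index part decomposes into $r-s$ pairs $e_i+e_j$ with $i \ne j$.

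The main obstacle is the combinatorial bookkeeping in this second case. Existence of the $\beta_k$ reduces to $\sum_{k<n}\min(\alpha_k,r-s)\ge 2(r-s)$, verified by a short case split on $t = \#\{k<n : \alpha_k > r-s\}$: the case $t=1$ uses $\alpha_k \le D-Nr$, and the case $t=0$ uses $D \ge Nr \ge 2r-s$. The degree estimate $|\gamma|=D-(2r-s)\ge(N-1)r$ is the key point: summing $\alpha_k \le D-Nr$ over $k=0,\dots,n-1$ gives $D\ge(nNr-s)/(n-1)$, which substituted into $|\gamma|\ge(N-1)r$ simplifies to $r(N-n+1)+s(n-2)\ge 0$, holding precisely because $n\le N+1$ and $n\ge 2$. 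The case $n=1$ always falls under $s=r$ and is immediate.
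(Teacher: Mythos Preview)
Your proof is correct and follows essentially the same route as the paper (reduction to coordinate points via Proposition~\ref{funda}, then the monomial analysis of Proposition~\ref{nlow}): both split according to whether the high-index part $\sum_{k\ge n}\alpha_k$ already supplies $r$ generators of $I$, and otherwise complete with products $x_ix_j$ for $i\ne j$, $i,j<n$. You are in fact more careful than the paper on two points: you explicitly verify the constraint $\beta_k\le r-s$ needed so that the low-index factors can be grouped into $r-s$ pairs with \emph{distinct} indices (the paper only checks divisibility, i.e.\ $2(r-p)\le s$, and tacitly assumes the pairing goes through), and you obtain the degree bound $|\gamma|\ge (N-1)r$ via the single inequality $r(N-n+1)+s(n-2)\ge 0$ rather than the paper's case split on whether $t\ge (N+1)r$.
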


Theorem \ref{thm1} follows from Propositions \ref{p3high}, \ref{p3low}, \ref{funda} and \ref{nlow},
while Theorem \ref{thm2} follows from Proposition \ref{funda} and \ref{nlow}.

For a homogeneous ideal $I \subset \field[\PP^N]$ define
$$I_t = \{ f \in I : \deg(f) = t \},$$
and
$$\alpha(I) = \min \{ t \geq 0 : I_t \neq \varnothing \}.$$
In \cite{Ch} Chudnovsky posed the following conjecture:

\begin{conjecture}[Chudnovsky]
\label{chuc}
Let $I$ be the ideal of a finite set of points in $\PP^N$. Then
$$\alpha(I^{(m)}) \geq m \frac{\alpha(I)+N-1}{N}.$$
\end{conjecture}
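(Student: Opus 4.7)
The plan is to reduce Chudnovsky's bound on $\alpha(I^{(m)})$ to a single asymptotic lower bound on the Waldschmidt constant
$$
\hat\alpha(I) \;=\; \lim_{m\to\infty} \frac{\alpha(I^{(m)})}{m},
$$
and then to obtain the latter via a Harbourne--Huneke-type containment. First I would record the subadditivity $\alpha(I^{(a+b)}) \le \alpha(I^{(a)}) + \alpha(I^{(b)})$, which follows from $I^{(a)} I^{(b)} \subseteq I^{(a+b)}$. Fekete's lemma then gives $\hat\alpha(I) = \inf_{m\ge 1} \alpha(I^{(m)})/m$, so that $\alpha(I^{(m)}) \ge m\,\hat\alpha(I)$ for every $m$. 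Chudnovsky's inequality for all $m$ therefore reduces to the single statement
$$
\hat\alpha(I) \;\ge\; \frac{\alpha(I)+N-1}{N}.
$$

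To produce this asymptotic bound I would aim to establish a Harbourne--Huneke-type containment $I^{(Nr)} \subseteq M^{(N-1)r} I^r$ for all $r \ge 1$. Any element of the right-hand side has degree at least $(N-1)r + \alpha(I^r) \ge r(N-1+\alpha(I))$, so the containment forces $\alpha(I^{(Nr)}) \ge r(N-1+\alpha(I))$; dividing by $Nr$ and sending $r \to \infty$ yields the desired lower bound on $\hat\alpha(I)$. In the settings of this paper — arbitrarily many generic points in $\PP^3$, and at most $N+1$ generic points in $\PP^N$ — this containment is exactly Theorems \ref{thm1} and \ref{thm2}, so Chudnovsky's conjecture follows immediately in those cases; but the conjecture as stated demands that the same argument be run for an \emph{arbitrary} finite point set.

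The main obstacle is therefore the removal of the general-position hypothesis from the containment $I^{(Nr)} \subseteq M^{(N-1)r} I^r$, since every containment the paper produces exploits general position essentially. To close the gap I would attempt a specialization argument: place an arbitrary configuration into a flat family whose generic fibre is in general position, apply the Waldschmidt bound on the generic fibre, and transport it to the special fibre. The difficulty is that $\alpha(I^{(m)})$ is semicontinuous in the wrong direction for this purpose, since special configurations admit more low-degree forms and hence a smaller $\alpha$ than the generic member; one therefore needs constancy of $\alpha(I)$ across the family together with a uniform lower bound on $\alpha(I^{(m)})$ across fibres, neither of which is provided by the combinatorial techniques of the present paper. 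An alternative route via multi-point Seshadri constants on the blow-up along the point scheme yields the Waldschmidt inequality cleanly, but only under nef/positivity hypotheses that fail for highly special configurations. In every known approach the crux of the general Chudnovsky conjecture is exactly this step, and it requires genuinely new ideas beyond those developed here.
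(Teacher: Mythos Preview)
The statement labeled Conjecture~\ref{chuc} is presented in the paper as an open conjecture, not as a theorem with a proof; the paper only records, as a corollary of Theorem~\ref{thm1} together with a citation to \cite[Lemma~3.2]{HaHu}, that Chudnovsky's inequality holds for generic points in $\PP^3$. Your proposal correctly reproduces exactly this: you reduce Chudnovsky's bound to the Waldschmidt-constant inequality $\hat\alpha(I)\ge (\alpha(I)+N-1)/N$ via subadditivity and Fekete's lemma, derive the latter from the containment $I^{(Nr)}\subset M^{(N-1)r}I^r$ by reading off $\alpha(I^{(Nr)})\ge (N-1)r+r\alpha(I)$, and then observe that this containment is precisely what Theorems~\ref{thm1} and~\ref{thm2} supply in the generic-position cases. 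That reduction is the content of the cited \cite[Lemma~3.2]{HaHu} (the paper's own $\gamma(I)$ is your $\hat\alpha(I)$), so for the special cases the paper actually treats, your route and the paper's coincide.

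Your final two paragraphs correctly diagnose why this does \emph{not} settle the conjecture as stated: every containment the paper produces relies essentially on general position (Cremona moves and point-gluing in $\PP^3$, explicit monomial generators for fundamental points in $\PP^N$), and semicontinuity of $\alpha$ under specialization points the wrong way to transport the bound to arbitrary configurations. This is an accurate assessment of the obstruction, not a defect in your argument; there is simply no proof of the general statement in the paper to compare against, and you are right that closing the gap requires ideas beyond those developed here.
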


More about this conjecture can be found e.g. in \cite{HaHu}. It is shown there
that this Conjecture holds for $\PP^2$ and that it holds for an ideal $I \subset \field[\PP^N]$
whenever $I^{(Nr)} \subset M^{(N-1)r}I^r$ (cf. Lemma 3.2 in \cite{HaHu}).
Therefore, as a corollary of Theorem \ref{thm1} we get

\begin{proposition}
Chudnovsky Conjecture holds for any number of generic points in $\PP^3$.
\end{proposition}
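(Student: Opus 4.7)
The plan is straightforward, since the proposition is presented as a corollary and the route is essentially sketched in the text just above it: invoke Theorem \ref{thm1} together with Lemma 3.2 of \cite{HaHu}. What I would want to do, though, is to actually unpack why the containment $I^{(3r)} \subset M^{2r}I^r$ forces the Chudnovsky bound for \emph{every} $m$, not just for $m$ a multiple of $3$. That is the only place where any real thought is needed.

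First I would apply Theorem \ref{thm1}: for $I = I(1^{\times n}) \subset \field[\PP^3]$ and every $r \geq 1$ we have the containment $I^{(3r)} \subset M^{2r} I^r$. If $f \in I^{(3r)}$ is a nonzero element of minimal degree, then writing it as a sum of products of one element of $M^{2r}$ and one element of $I^r$ gives
\[
\alpha(I^{(3r)}) \geq 2r + r\,\alpha(I) = r\bigl(\alpha(I)+2\bigr),
\]
which is exactly the Chudnovsky inequality in the case $m = 3r$.

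Next I would pass from multiples of $3$ to arbitrary $m$. The function $m \mapsto \alpha(I^{(m)})$ is subadditive, because the product of a form in $I^{(a)}$ and a form in $I^{(b)}$ lies in $I^{(a+b)}$. By Fekete's lemma the Waldschmidt constant
\[
\widehat{\alpha}(I) \;=\; \lim_{m \to \infty} \frac{\alpha(I^{(m)})}{m} \;=\; \inf_{m \geq 1} \frac{\alpha(I^{(m)})}{m}
\]
exists, and it is bounded below by any subsequential lower bound. Restricting to $m = 3r$ and using the inequality above yields
\[
\widehat{\alpha}(I) \;\geq\; \lim_{r\to\infty} \frac{r(\alpha(I)+2)}{3r} \;=\; \frac{\alpha(I)+2}{3}.
\]
Finally, since $\alpha(I^{(m)})/m \geq \widehat{\alpha}(I)$ for \emph{every} $m$, we obtain
\[
\alpha(I^{(m)}) \;\geq\; m\cdot\frac{\alpha(I)+2}{3} \;=\; m\cdot\frac{\alpha(I)+N-1}{N}
\]
with $N = 3$, which is precisely Conjecture \ref{chuc} for the ideal of generic points in $\PP^3$.

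There is essentially no obstacle here beyond the subadditivity/Fekete step that bridges multiples of $3$ to arbitrary $m$; this is exactly the content of Lemma~3.2 of \cite{HaHu} to which the introduction alludes, so in a write-up I would either cite that lemma directly or include the two-line Waldschmidt argument above. All of the real work sits in Theorem \ref{thm1}, which is what the rest of the paper is devoted to proving.
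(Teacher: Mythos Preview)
Your proposal is correct and follows exactly the route the paper indicates: combine Theorem~\ref{thm1} with Lemma~3.2 of \cite{HaHu}. The paper itself gives no further proof beyond that sentence; your unpacking of the lemma via subadditivity and the Waldschmidt constant is precisely the content of Proposition~\ref{Wc}(1), so your argument is just a slightly more explicit version of the same approach.
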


We also recall the following numerical quantity, which we will need later:

\begin{proposition}
\label{Wc}
Let $I \subset K[\PP^{N}]$ be a non-zero homogeneous ideal. Define the \emph{Waldschmidt constant}
$$\gamma(I) = \lim_{m \to \infty} \frac{\alpha(I^{(m)})}{m}.$$
Then the following holds:
\begin{enumerate}
\item
$\gamma(I)$ exists and satisfies $\alpha(I^{(m)}) \geq m\gamma(I)$ for all $m \geq 1$,
\item
if $I \subset J$ are ideals then $\gamma(I) \geq \gamma(J)$,
\item
If $I$ is an ideal of fat points then $\gamma(I^{(r)}) = r\gamma(I)$.
\end{enumerate}
\end{proposition}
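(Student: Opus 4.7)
My plan is to dispatch the three items in turn; each is a short consequence of the containment $I^{(a)} \cdot I^{(b)} \subset I^{(a+b)}$ together with the explicit description of symbolic powers of fat point ideals.

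First, for (1), I will show that the integer sequence $a_m := \alpha(I^{(m)})$ is subadditive: multiplying a minimal-degree form $f \in I^{(a)}$ by a minimal-degree form $g \in I^{(b)}$ produces an element of $I^{(a)} \cdot I^{(b)} \subset I^{(a+b)}$ of degree $a_a + a_b$, whence $a_{a+b} \leq a_a + a_b$. Fekete's lemma on subadditive sequences then guarantees that $\lim_m a_m/m$ exists and equals $\inf_m a_m/m$, which is precisely the statement that $\gamma(I)$ is well defined and $\alpha(I^{(m)}) \geq m\gamma(I)$ for every $m \geq 1$.

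Next, for (2), the key reduction is to establish $I^{(m)} \subset J^{(m)}$ for all $m$, after which $\alpha(I^{(m)}) \geq \alpha(J^{(m)})$ is immediate and the result follows by dividing by $m$ and letting $m \to \infty$. In the fat-points setting relevant to this paper this inclusion is transparent from the description $I^{(m)} = \bigcap_j \mathfrak{m}_{p_j}^{m m_j}$: passing to $J$ corresponds to dropping some components or decreasing some multiplicities, and both operations commute with raising to the $m$-th symbolic power.

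For (3), since $I = \bigcap_j \mathfrak{m}_{p_j}^{m_j}$, its symbolic power $I^{(r)} = \bigcap_j \mathfrak{m}_{p_j}^{r m_j}$ is again a fat points ideal with multiplicities scaled by $r$, and iterating gives $(I^{(r)})^{(m)} = I^{(rm)}$. Substituting into the definition of $\gamma$ and reparametrizing along the subsequence $rm$ yields $\gamma(I^{(r)}) = r\gamma(I)$.

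The only point requiring real care is the containment $I^{(a)} \cdot I^{(b)} \subset I^{(a+b)}$ underlying the subadditivity in (1); but for fat points ideals this is immediate from the pointwise-vanishing characterization, since a form vanishing to order $a m_j$ at $p_j$ times a form vanishing to order $b m_j$ at $p_j$ vanishes to order $(a+b)m_j$ there. I do not foresee any further obstacle.
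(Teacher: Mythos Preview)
Your proposal is correct and follows essentially the same approach as the paper: for part (1) the paper simply cites \cite{BoHa1}, Lemma 2.3.1, whose content is exactly the subadditivity/Fekete argument you spell out; parts (2) and (3) in the paper are argued just as you do, via the inclusion $I^{(m)} \subset J^{(m)}$ and the reparametrization $(I^{(r)})^{(m)} = I^{(rm)}$ along the subsequence $rm$. The only minor difference is that you explicitly restrict the justification of $I^{(a)} I^{(b)} \subset I^{(a+b)}$ and $I^{(m)} \subset J^{(m)}$ to the fat-points case, which is all that is actually needed in the paper.
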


\begin{proof}
The proof of part 1) is given in \cite{BoHa1} (Lemma 2.3.1 and its proof). For 2) observe
that if $I \subset J$ then $I^{(m)}_t \subset J^{(m)}_t$ hence $\alpha(I^{(m)}) \geq \alpha(J^{(m)})$.
To prove 3) consider the subsequence $rm$, $m \to \infty$:
$$
\gamma(I) = \lim_{m \to \infty} \frac{\alpha(I^{(rm)})}{rm} = \frac{1}{r} \lim_{m \to \infty} \frac{\alpha((I^{(r)})^{(m)})}{m} = \frac{1}{r} \gamma(I^{(r)}).
$$
\end{proof}

\section{Results for $\PP^3$}

In this section we assume $N=3$.

\begin{proposition}
\label{cremona}
Let $\seqmul$ be a sequence of multiplicities. If 
$$I(m_1,m_2,m_3,m_4,\seqmul)_{d} \neq \varnothing$$
then
$$I(m_1+k,m_2+k,m_3+k,m_4+k,\seqmul)_{d+k} \neq \varnothing$$
for $k=2d-m_1-m_2-m_3-m_4$.
\end{proposition}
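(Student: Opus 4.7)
The plan is to produce the required element of $I(m_1+k,\dots,m_4+k,\bar m)_{d+k}$ as the strict transform of a given $F\in I(m_1,\dots,m_4,\bar m)_d$ under the standard Cremona involution of $\PP^3$ based at $p_1,\dots,p_4$. Since the four points are in general position, a projective change of coordinates lets me assume $p_i = e_{i-1}$, and I will then use
\[
\phi([x_0{:}x_1{:}x_2{:}x_3]) = [x_1x_2x_3 : x_0x_2x_3 : x_0x_1x_3 : x_0x_1x_2],
\]
for which $\phi^*(y_i)=\prod_{j\neq i} x_j$ and hence $\phi^*(y^a)=\prod_j x_j^{d-a_j}$ on any monomial of degree $d$.

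Write $F = \sum c_a y^a$. The multiplicity $m_1$ at $e_0$ is equivalent to $c_a\neq 0 \Rightarrow a_1+a_2+a_3 \geq m_1$, i.e.\ the $x_0$-exponent $d-a_0 = a_1+a_2+a_3$ of every surviving $\phi^*(y^a)$ is at least $m_1$. By symmetry, $x_0^{m_1}, x_1^{m_2}, x_2^{m_3}, x_3^{m_4}$ all divide $\phi^*(F)$, so
\[
G \;:=\; \phi^*(F)\big/\bigl(x_0^{m_1}x_1^{m_2}x_2^{m_3}x_3^{m_4}\bigr)
\]
is a nonzero form of degree $3d - \sum_{i=1}^4 m_i = d+k$.

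It remains to bound the multiplicities of $G$. For each term $\prod_j x_j^{d-a_j}$, the multiplicity at $e_0$ is $(d-a_1)+(d-a_2)+(d-a_3) = 2d+a_0 \geq 2d$, so $\operatorname{mult}_{e_0}(\phi^*F)\geq 2d$; subtracting the multiplicity $m_2+m_3+m_4$ at $e_0$ of the denominator monomial gives $\operatorname{mult}_{e_0}(G) \geq 2d-(m_2+m_3+m_4) = m_1+k$, and the symmetric argument works at $e_1,e_2,e_3$. For each $j\geq 5$, $\phi$ is a local isomorphism at $q_j:=\phi^{-1}(p_j)$, which by general position lies off the coordinate hyperplanes; therefore $G$ has multiplicity $m_j$ at $q_j$. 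Since $\phi$ is birational, $\{e_0,\dots,e_3,q_5,\dots,q_n\}$ is again a set of points in general position, and relabeling yields the desired nonzero element of $I(m_1+k,\dots,m_4+k,\bar m)_{d+k}$.

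The main obstacle is pure bookkeeping: verifying the four divisibilities and multiplicity bounds so that the exponents balance exactly to $m_i+k$ at each Cremona point. Two quick sanity checks are worth making explicit. First, $d+k = 3d-\sum m_i \geq 0$ follows by summing the four inequalities $d-a_i\geq m_{i+1}$ satisfied by any surviving monomial. Second, should some $m_i+k$ turn out to be negative, the convention $\mathfrak m_p^m=\field[\PP^3]$ for $m<0$ built into $I(\seqmul)$ simply removes that factor from the target ideal, and the corresponding bound on $G$ becomes automatic.
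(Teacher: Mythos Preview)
Your argument is correct and takes a genuinely different route from the paper's main proof. The paper invokes an external result (\cite{Dumalg}, Theorem~3) for the case where all $m_j+k\geq 0$, and then handles $m_j+k<0$ separately by first stripping off a hyperplane factor (via \cite{Dumalg}, Theorem~4) and iterating until the direct Cremona step applies. You instead compute the standard Cremona involution explicitly in coordinates, pull $F$ back monomial by monomial, and read off both the divisibilities and the multiplicity bounds from pure exponent arithmetic. This is more elementary and entirely self-contained; moreover, the convention $\mathfrak m_p^{m}=\field[\PP^3]$ for $m<0$ absorbs the boundary case $m_i+k<0$ automatically, so no case split is needed. (The paper does sketch a second argument via the induced action on $\Pic(X)$ for the blow-up $X\to\PP^3$, which is closer in spirit to yours but is left as a one-line outline.) The one place your write-up is slightly brisk is the claim that $\{e_0,\dots,e_3,q_5,\dots,q_n\}$ is again in general position; the underlying justification is that ``normalize four points to the coordinate points, then apply $\phi^{-1}$ to the remaining ones'' is a birational automorphism of the configuration space and hence carries a general configuration to a general one.
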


\begin{proof}
If $m_j+k \geq 0$ for $j=1,2,3,4$ then it follows by applying Cremona transformation \cite[Theorem 3]{Dumalg}.
If $d < m_j$ for some $j$ then
the claim holds trivially. Now consider the case (we can permute points, if necessary) $k+m_4 < 0$.
It follows that 
$$\ell=2d-m_1-m_2-m_3 < 0$$
and each element in $I(m_1,m_2,m_3,m_4,\seqmul)$ has a factor
$f^{\ell}$, where $f$ denotes the equation of the hyperplane passing through points associated with $m_1$, $m_2$, $m_3$
(cf. \cite[Theorem 4]{Dumalg}). It follows that there exists an element of degree $d+\ell$ in 
$$I(m_1+\ell,m_2+\ell,m_3+\ell,m_4,\seqmul).$$
If $d+k=3d-m_1-m_2-m_3-m_4$ is negative then $d+\ell < m_4$ and again the claim holds trivially.
Take $d'=d+\ell$, $m_j'=m_j+\ell$ for $j=1,2,3$, $m_4'=m_4$. By easy computations we check that for 
$$k'=2d'-m_1'-m_2'-m_3'-m_4'$$ we have
$d'+k'=d+k$, $m_j+k=m_j'+k'$ for $j=1,2,3$, $m_4'+k'=0$, $m_4+k < 0$. If $k'+m_j' \geq 0$ for $j=1,2,3$ then we are done by Cremona transformation.
If not, we reorder points and make this trick again, until Cremona is possible.

Alternatively, we can blow up $\pi : X \to \PP^3$ at $p_1,\dots,p_n$ and observe that standard birational transformation of $\PP^3$ induces an action on $\Pic(X)$
such that $\pi^{*}(H) \mapsto 3\pi^{*}H+2(E_1+E_2+E_3+E_4)$, $E_j \mapsto H+(E_1+E_2+E_3+E_4)-E_j$ for $j=1,2,3,4$ and $E_j \mapsto E_j$ for $j > 4$. 
\end{proof} 

\begin{theorem}
\label{glueones}
Let $\seqmul$ be a sequence of multiplicities. Then $\gamma(I(1^{\times 8},\seqmul)) \geq \gamma(I(2,\seqmul))$.
\end{theorem}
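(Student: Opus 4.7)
My plan is to establish, for every $r \geq 1$, the inequality
\[
\alpha\bigl(I(1^{\times 8}, \seqmul)^{(r)}\bigr) \;\geq\; \alpha\bigl(I(2, \seqmul)^{(r)}\bigr),
\]
from which the desired comparison of Waldschmidt constants will follow upon dividing by $r$ and invoking part (1) of Proposition~\ref{Wc}.

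I would work with a flat one-parameter family of configurations in which the eight generic simple points $p_1(t), \dots, p_8(t)$ coalesce, as $t \to 0$, to a single generic point $p_0$ along eight generic tangent directions $v_1, \dots, v_8 \in \field^3$, while the $\seqmul$-fat points remain fixed in general position. The $r$-th symbolic power $I(r^{\times 8}, r\seqmul)$ then varies in a family of graded ideals whose graded pieces have upper semicontinuous dimension, yielding $\alpha(I(r^{\times 8}, r\seqmul)) \geq \alpha(I_{Z_0})$, where $Z_0$ denotes the flat scheme-theoretic limit at $t = 0$.

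The key claim is that the local limit at $p_0$ satisfies $I_{Z_0} \subset \mathfrak{m}_{p_0}^{2r}$. Expanding each polar functional $\partial^{\alpha} f(p_i(t)) = 0$ (with $|\alpha| < r$, $i = 1, \dots, 8$) as a power series in $t$, and taking the Grassmannian limit of the $8\binom{r+2}{3}$-dimensional subspace of $\field[\PP^3]^*$ these functionals span, one can verify that the limit subspace contains every linear functional of the form ``evaluation of a fixed Taylor coefficient of $f$ at $p_0$ of order $k$'' for each $0 \leq k < 2r$. This uses the hockey-stick identity
\[
\sum_{k=0}^{2r-1} \binom{k+2}{2} \;=\; \binom{2r+2}{3}
\]
together with the elementary inequality $\binom{2r+2}{3} \leq 8\binom{r+2}{3}$ (valid for every $r \geq 1$, with difference $2r(r+1)$), combined with a generic-rank assertion: for generic $v_i$, the constraints induced on $\mathrm{Sym}^k \field^3$ at each Taylor order $k \in \{r, \dots, 2r-1\}$ are of full rank. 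Granted this, every $f \in I_{Z_0}$ vanishes to order $\geq 2r$ at $p_0$, so
\[
I_{Z_0} \cap I(r\seqmul) \;\subset\; \mathfrak{m}_{p_0}^{2r} \cap I(r\seqmul) \;=\; I(2, \seqmul)^{(r)},
\]
and the desired $\alpha$-inequality follows.

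The hard part will be the generic-rank assertion at each order $k \in \{r, \dots, 2r - 1\}$, an Alexander--Hirschowitz-type statement about the osculating scheme attached to eight generic tangent directions in $\PP^3$. For small $r$ (e.g.\ $r = 1, 2$) it can be verified by a direct rank computation, exhibiting enough independent functionals at each Taylor order $k$; for general $r$ one would argue by induction on $r$ or invoke a suitable result in the Alexander--Hirschowitz framework for higher-order osculating schemes.
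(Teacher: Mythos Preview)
Your degeneration/semicontinuity strategy is reasonable in spirit, but the proposal has a genuine gap at exactly the point you flag as ``the hard part.'' The generic-rank assertion is the entire content of the argument, and you have not proven it; ``verify for small $r$'' and ``argue by induction or invoke Alexander--Hirschowitz'' are not proofs. In fact Alexander--Hirschowitz concerns double points in $\PP^n$ and says nothing about the higher-order osculating data you need here. If one unwinds your order-$k$ claim carefully (the span of the functionals $\partial^\alpha (v_i\cdot\nabla)^{k-|\alpha|}$, $|\alpha|<r$, inside $\mathrm{Sym}^k(\field^3)^*$), its annihilator consists of degree-$k$ ternary forms vanishing to order $r$ at the eight points $[v_i]\in\PP^2$; so what you actually need is $\alpha_{\PP^2}(I(r^{\times 8}))\geq 2r$. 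That is true, but it is a nontrivial fact about linear systems through $\leq 8$ general points of $\PP^2$ (Harbourne's classification, not Alexander--Hirschowitz), and it would have to be supplied. You would also need to justify that the Grassmannian limit of your span of functionals really filters by Taylor order in the way you describe; this is not automatic for a collision along several directions simultaneously.

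The paper's proof is quite different and avoids all of this. It first proves directly, by an explicit induction using the standard Cremona transformation of $\PP^3$ (Proposition~\ref{cremona}), that $\alpha(I(m^{\times 8}))\geq 2m$: assuming a form of degree $2m-1$ in $I(m^{\times 8})$ exists, repeated Cremona moves produce forms of ever more negative degree. With this emptiness statement in hand, a cited ``gluing'' theorem \cite[Theorem 9]{Dumalg} yields $\alpha(I(1^{\times 8},\seqmul)^{(m)})\geq \alpha(I(2,\seqmul)^{(m)})$, from which the Waldschmidt inequality follows. Thus the key input the paper isolates and proves is $\alpha_{\PP^3}(I(m^{\times 8}))\geq 2m$, via a short Cremona argument; your proposal replaces that clean step by an unproven rank assertion of at least comparable difficulty.
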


\begin{proof}
We begin with showing that $\alpha(I(m^{\times 8})) \geq 2m$. Indeed, assume that there exists an element of degree $2m-1$ in $I(m^{\times 8})$.
We will show that for each $s \geq 0$ there exists an element of degree $2m-(8s^2+1)$ in 
$$I((m-(4s^2-2s))^{\times 4},(m-(4s^2+2s))^{\times 4}),$$
which leads to a contradiction for $s$ big enough.

For $s=0$ the claim holds, now we argue by induction. Take 
$$k=2(2m-(8s^2+1))-4(m-(4s^2-2s))=-8s-2,$$
by Proposition \ref{cremona}
there exists an element of degree $2m-(8s^2+1)+k=2m-8s^2-8s-3$ in 
$$I(m'^{\times 4},(m-(4s^2+2s))^{\times 4})$$
for 
$$m'=m-(4s^2-2s)+k=m-4s^2-6s-2=m-(4(s+1)^2-2(s+1)).$$
Again, take 
$$k=2(2m-8s^2-8s-3)-4(m-(4s^2+2s))=-8s-6,$$
and use Proposition \ref{cremona}.
The degree of the obtained element is equal to 
$$2m-8s^2-8s-3+k=2m-8s^2-16s-8-1=2m-(8(s+1)^2+1)$$
and it belongs to $I(m'^{\times 4},m''^{\times 4})$ for
$$m''=m-4s^2-2s-8s-6=m-(4(s+1)^2+2(s+1)),$$
which completes the induction.

Let $J=I(2,\seqmul)$, let $I=I(1^{\times 8},\seqmul)$, take $m \geq 1$ and assume that $(J^{(m)})_t = \varnothing$. Since $(I(m^{\times 8}))_{2m-1}=\varnothing$,
we can ``glue'' points $m^{\times 8} \to 2m$ as in \cite[Theorem 9]{Dumalg} to show that $(I^{(m)})_t = \varnothing$. Hence
$$\alpha(I^{(m)}) \geq \alpha(J^{(m)})$$
and, by Proposition \ref{Wc},
$$\alpha(J^{(m)}) \geq \gamma(J)m.$$
Dividing by $m$ and passing to the limit completes the proof.
\end{proof}

\begin{proposition}
\label{8k}
Let $s, r \geq 1$, let $n \geq r8^s$. Then
$$\gamma(I(1^{\times n})) \geq 2^s \gamma(I(1^{\times r})).$$
\end{proposition}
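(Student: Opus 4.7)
The plan is induction on $s$, where each increment will correspond to iterating Theorem \ref{glueones} $r$ times in succession to glue octets of simple points into double points.

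The key step I would isolate is the one-step version: if $n \geq 8r$, then $\gamma(I(1^{\times n})) \geq 2\gamma(I(1^{\times r}))$. To prove this, I would apply Theorem \ref{glueones} exactly $r$ times, each application converting a block of eight ones into a single two (the auxiliary sequence $\seqmul$ picks up 2's along the way, but that is permitted since $\seqmul$ is an arbitrary sequence of multiplicities). This yields the chain
$$\gamma(I(1^{\times n})) \geq \gamma(I(2,1^{\times(n-8)})) \geq \gamma(I(2^{\times 2},1^{\times(n-16)})) \geq \cdots \geq \gamma(I(2^{\times r},1^{\times(n-8r)})).$$
Since adding vanishing conditions only shrinks the ideal, part 2 of Proposition \ref{Wc} lets me drop the leftover simple points, giving $\gamma(I(2^{\times r},1^{\times(n-8r)})) \geq \gamma(I(2^{\times r}))$. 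Then $I(2^{\times r}) = I(1^{\times r})^{(2)}$, so part 3 of Proposition \ref{Wc} converts this into $2\gamma(I(1^{\times r}))$, completing the one-step inequality.

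With this in hand, the induction is routine. The case $s=1$ is precisely the one-step inequality. For the step from $s$ to $s+1$, I would rewrite $n \geq r\cdot 8^{s+1}$ as $n \geq (8r)\cdot 8^s$, invoke the inductive hypothesis with $r$ replaced by $8r$ to get $\gamma(I(1^{\times n})) \geq 2^s\gamma(I(1^{\times 8r}))$, and then apply the one-step inequality to the pair $(8r,r)$ to supply the missing factor of $2$. The only real obstacle is the one-step argument itself, and within that, the single delicate point is verifying that Theorem \ref{glueones} can be iterated when $\seqmul$ already contains larger multiplicities; the hypotheses of that theorem permit this, so everything else reduces to bookkeeping and to parts 2 and 3 of Proposition \ref{Wc}.
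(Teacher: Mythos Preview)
Your argument is correct and follows essentially the same route as the paper: iterate Theorem~\ref{glueones} to trade blocks of eight simple points for double points, use $I(2^{\times k})=(I(1^{\times k}))^{(2)}$ together with Proposition~\ref{Wc}(3) to extract a factor of $2$, and repeat $s$ times. The only cosmetic difference is that the paper invokes Proposition~\ref{Wc}(2) once at the outset to reduce to $n=r8^s$ (so no leftover simple points ever appear), whereas you carry the leftover $1^{\times(n-8r)}$ through the gluing and discard it afterwards.
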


\begin{proof}
Since $I(1^{\times n}) \subset I(1^{\times r8^s})$ we have
$\gamma(I(1^{\times n})) \geq \gamma(I(1^{\times r8^s}))$.
Applaying Theorem \ref{glueones} $r8^{s-1}$ times we get
$$\gamma(I(1^{\times r8^s})) \geq \gamma(I(2^{\times r8^{s-1}})).$$
Observe that
$$I(2^{\times r8^{s-1}}) = (I(1^{\times r8^{s-1}}))^{(2)},$$
and hence by Proposition \ref{Wc}
$$\gamma(I(2^{\times r8^{s-1}})) = 2\gamma(I(1^{\times r8^{s-1}})).$$
Repeat the above $s-1$ more times to complete the proof.
\end{proof}

\begin{proposition}
\label{gammas}
The following inequalities hold:
\begin{enumerate}
\item
$\gamma(I(1)) \geq 1$,
\item
$\gamma(I(1^{\times 4})) \geq \frac{4}{3}$,
\item
$\gamma(I(1^{\times n})) \geq \frac{5}{3}$ for $n=5,6,7$,
\item
$\gamma(I(1^{\times 24})) \geq \frac{23}{9}$,
\item
$\gamma(I(2,2,2,1,1)) \geq \frac{8}{3}$,
\item
$\gamma(I(2,2,1,1,1)) \geq \frac{7}{3}$.
\end{enumerate}
\end{proposition}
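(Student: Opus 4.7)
The plan is to establish each of the six inequalities by a uniform strategy: assume $I^{(m)}_d \neq \varnothing$ for some $m \geq 1$ and some $d$ strictly below the claimed bound, apply Proposition \ref{cremona} once or twice (in the hardest case after invoking Theorem \ref{glueones}), and observe that the resulting sequence retains a point of multiplicity $m$ at a degree less than $m$, which is impossible.

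Part (1) is trivial since $I(1) = \mathfrak{m}_{p}$, so $\alpha(\mathfrak{m}_{p}^{m}) = m$ and $\gamma(I(1)) = 1$. For part (2), a single Cremona with $k = 2d - 4m$ applied to $I(m^{\times 4})_d$ produces $I((2d-3m)^{\times 4})_{3d-4m}$; nonemptiness of the output forces $3d - 4m \geq 0$, hence $d \geq 4m/3$. For part (3), monotonicity of $\gamma$ in the number of simple points reduces the claim to $n = 5$; one Cremona on the first four points of $I(m^{\times 5})_d$ yields $I((2d-3m)^{\times 4}, m)_{3d-4m}$, and the untouched $m$-fold fifth point forces $3d - 4m \geq m$, so $d \geq 5m/3$.

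Parts (5) and (6) follow the same one-Cremona template, where the crucial feature is that one $m$-fold point survives the transformation. For (5), Cremona on the first four multiplicities of $I(2m, 2m, 2m, m, m)_d$ with $k = 2d - 7m$ gives a system of degree $3d - 7m$ containing the unchanged fifth $m$-fold point, hence $d \geq 8m/3$. For (6), Cremona on the first four multiplicities of $I(2m, 2m, m, m, m)_d$ with $k = 2d - 6m$ yields a system of degree $3d - 6m$ retaining an $m$-fold point, giving $d \geq 7m/3$.

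The main obstacle is part (4): a naive iterated Cremona on $I(m^{\times 24})_d$ only delivers $\gamma \geq 2$, short of $23/9$. The resolution is to first apply Theorem \ref{glueones} twice, collapsing two disjoint octuples of simple points into double points,
$$\gamma(I(1^{\times 24})) \geq \gamma(I(2, 1^{\times 16})) \geq \gamma(I(2, 2, 1^{\times 8})),$$
and then to prove $\gamma(I(2, 2, 1^{\times 8})) \geq 23/9$ by a chained two-step Cremona. Starting from $I(2m, 2m, m^{\times 8})_d$, the first Cremona on $(2m, 2m, m, m)$ with $k = 2d - 6m$ yields $I((2d-4m)^{\times 2}, (2d-5m)^{\times 2}, m^{\times 6})_{3d - 6m}$; the second Cremona on four of the six remaining $m$-points with $k' = 6d - 16m$ produces a system of degree $9d - 22m$ still containing two untouched $m$-fold points. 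These points force $9d - 22m \geq m$, i.e., $d \geq 23m/9$. Recognizing this specific gluing-plus-two-Cremona pattern is the crux; once spotted, the computation is routine.
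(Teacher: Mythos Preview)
Your proof is correct and follows essentially the same route as the paper: for parts (2), (3), (5), (6) a single Cremona leaves an $m$-fold point (or forces negative degree), while for (4) you first glue twice via Theorem \ref{glueones} to reach $I(2,2,1^{\times 8})$ and then apply a two-step Cremona leaving an untouched $m$-fold point. The only cosmetic difference is that you work with a general pair $(m,d)$ and extract the inequality $d\geq cm$, whereas the paper plugs in the extremal degree (e.g., $d=23m-1$ for the $(9m)$th symbolic power) and reaches a contradiction directly.
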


\begin{proof}
\begin{enumerate}
\item
This is obvious.
\item
Let $I=I(1^{\times 4})$. Take $m \geq 1$ and assume that $(I^{(3m)})_{4m-1} \neq \varnothing$.
By Proposition \ref{cremona} there exists an element of degree $-3$ in $I((-m-2)^{\times 4})=I(0,0,0,0)=\field[\PP^{N}]$, a contradiction.
Hence $\alpha(I^{(3m)}) \geq 4m$. Taking the limit we obtain $\gamma(I) \geq \frac{4}{3}$.
\item
By Proposition \ref{Wc} we have $\gamma(I(1^{\times n})) \geq \gamma(I(1^{\times 5}))$. Assuming $I((3m)^{\times 5})_{5m-1} \neq \varnothing$
gives (by Proposition \ref{cremona}) an element of degree $3m-3$ in $I(3m)$, a contradiction. Hence $\alpha(I(1^{\times 5})^{(3m)}) \geq 5m$ and the claim follows.
\item
By Theorem \ref{glueones}
$$\gamma(I(1^{\times 24})) \geq \gamma(I(2,2,1^{\times 8})).$$
Assume that there exists an element of degree $23m-1$ in $I(18m,18m,(9m)^{\times 8})$. By Proposition \ref{cremona} there exists an element of degree
$15m-3$ in $I((9m)^{\times 6},11m-2,11m-2,m-2,m-2)$. Again, by Proposition \ref{cremona} there exists an element of degree $9m-9$ in
$I(9m,\dots)$, a contradiction. Hence $\alpha(I(2,2,1^{\times 8})^{(9m)}) \geq 23m$ and the claim follows.
\item
If
$$I(6m,6m,6m,3m,3m)_{8m-1} \neq \varnothing$$
then, by Proposition \ref{cremona},
$$I(3m,m-2,m-2,m-2,-2m-2)_{3m-3} \neq \varnothing,$$
a contradiction. As above, the claim follows.
\item
If
$$I(6m,6m,3m,3m,3m)_{7m-1} \neq \varnothing$$
then, by Proposition \ref{cremona},
$$I(3m,2m-2,2m-2,-m-2,-m-2)_{3m-3} \neq \varnothing,$$
a contradiction. As above, the claim follows.
\end{enumerate}
\end{proof}

\begin{proposition}
\label{gamma}
Let $n \geq 1$, let $I=I(1^{\times n})$, let
$$\binom{s}{3} < n \leq \binom{s+1}{3}.$$
If $\gamma(I) \geq \frac{s+1}{3}$
then
$I^{(3r)} \subset M^{2r}I^{r}$ for all $r \geq 1$.
\end{proposition}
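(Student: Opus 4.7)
The plan is to combine three ingredients: a Waldschmidt-constant lower bound on $\alpha(I^{(3r)})$, the Ein--Lazarsfeld--Smith containment $I^{(3r)}\subseteq I^{r}$, and an upper bound on the generating degrees of the ideal $I$ of generic points.

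First, Proposition~\ref{Wc}(1) turns the hypothesis $\gamma(I)\geq(s+1)/3$ into $\alpha(I^{(3r)})\geq 3r\gamma(I)\geq r(s+1)$, so every nonzero homogeneous form in $I^{(3r)}$ has degree at least $r(s+1)$. Second, the inequality $n\leq\binom{s+1}{3}<\binom{s+2}{3}=\dim\field[\PP^{3}]_{s-1}$ shows by a dimension count that $n$ generic points in $\PP^{3}$ lie on some surface of degree $s-1$, so $\alpha(I)\leq s-1$; the maximum-Hilbert-function property of generic points then forces $I$ to be generated in degrees at most $s-1$, and consequently every minimal generator of $I^{r}$ has degree at most $r(s-1)$. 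Third, since every associated prime of $I$ has height $3$ in $\field[\PP^{3}]$, the Ein--Lazarsfeld--Smith containment yields $I^{(3r)}\subseteq I^{r}$.

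Combining these, for any homogeneous $f\in I^{(3r)}$ I would write $f=\sum_{i}g_{i}h_{i}$ with each $h_{i}$ a minimal generator of $I^{r}$ and $g_{i}\in\field[\PP^{3}]$ homogeneous. Each $g_{i}$ then has degree $\deg f-\deg h_{i}\geq r(s+1)-r(s-1)=2r$, so $g_{i}\in M^{2r}$, and thus $f\in M^{2r}I^{r}$. This gives $I^{(3r)}\subseteq M^{2r}I^{r}$ for all $r\geq 1$.

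The main obstacle is the generation statement that $I=I(1^{\times n})$ is generated in degrees at most $s-1$ for $n\leq\binom{s+1}{3}$ generic points in $\PP^{3}$. This is a nontrivial assertion about the minimal free resolution of a generic set of points; numerically it amounts to the multiplication map $\field[\PP^{3}]_{1}\otimes I_{s-1}\to I_{s}$ having maximal rank (one checks $4\binom{s+1}{2}\geq\binom{s+2}{2}+\binom{s+1}{2}$), which is then transferred to generic configurations by semicontinuity from a suitable specialization.
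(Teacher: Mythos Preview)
Your approach is essentially the same as the paper's: both establish $\alpha(I^{(3r)})\geq r(s+1)$ from the Waldschmidt hypothesis, both need $I$ generated in degrees $\leq s-1$, and your inline ELS-plus-degree-count argument is exactly the content of \cite[Proposition~2.3]{HaHu}, which the paper simply cites.

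The one point worth noting is your ``main obstacle''. You frame the generation bound as a delicate statement about syzygies or maximal rank of the multiplication map, requiring semicontinuity from a special configuration. The paper's argument is more direct and avoids all of that: for $n\leq\binom{s+1}{3}=\dim\field[\PP^3]_{s-2}$ generic points, each point imposes an independent linear condition on forms of degree $t$ whenever $t\geq s-2$, so $\dim(\field[\PP^3]/I)_t=n$ is constant for $t\geq s-2$. Thus the Hilbert function of $\field[\PP^3]/I$ agrees with its Hilbert polynomial from degree $s-2$ on, which for a zero-dimensional (hence arithmetically Cohen--Macaulay) scheme forces the Castelnuovo--Mumford regularity of $I$ to be at most $s-1$; generation in degrees $\leq s-1$ follows. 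No rank computation for $\field[\PP^3]_1\otimes I_{s-1}\to I_s$ is needed.
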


\begin{proof}
Let $0 \leq k \leq n$, let $t \geq 1$, observe that $I(1^{\times k})_t$ is a finite dimensional projective vector space over $\field$.
If $f \in I(1^{\times k})_t$ then taking an additional point $p \in \PP^3$ such that $f(p) \neq 0$ we get
$$\dim_{\field} I(1^{\times (k+1)})_t = \dim_{\field} I(1^{\times k})_t - 1.$$
Let $t \geq s-2$. Then
$$\dim_{\field} I_{t} = \dim_{\field} I(1^{\times n})_{t} = \binom{t+3}{3} - n$$
and
$$\dim_{\field} (\field[\PP^3]/I)_{t} = n = \dim_{\field} (\field[\PP^3]/I)_{s-2}.$$
From the above equations we can see that the Hilbert function of $I$ is equal to the Hilbert polynomial of $I$ for degrees
at least $s-2$, hence the Castelnuovo-Mumford regularity of $I$ is at most $s-1$ and $I$ is generated in degrees at most $s-1$.

From Proposition \ref{Wc} and our assumption
$$\frac{\alpha(I^{(3r)})}{3r} \geq \gamma(I) \geq \frac{s+1}{3},$$
thus
$$\alpha(I^{(3r)}) \geq r(s+1) = r(s-1)+r(3-1).$$
By \cite[Proposition 2.3]{HaHu} $I^{(3r)} \subset M^{2r}I^{r}$.
\end{proof}

\begin{theorem}
\label{p3high}
For $n \geq 512$ and $I = I(1^{\times n})$ we have $I^{(3r)} \subset M^{2r}I^{r}$.
\end{theorem}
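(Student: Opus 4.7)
The plan is to invoke Proposition \ref{gamma}: it suffices to prove $\gamma(I(1^{\times n})) \geq (s+1)/3$, where $s$ is the integer determined by $\binom{s}{3} < n \leq \binom{s+1}{3}$. All the lower bounds on $\gamma$ will be produced by feeding Proposition \ref{8k} with the four explicit base estimates in Proposition \ref{gammas}(1)--(4), namely $\gamma(I(1^{\times r})) \geq 1,\ 4/3,\ 5/3,\ 23/9$ for $r = 1, 4, 5, 24$. Because $n \geq 512 = 8^3$, the integer $k = \lfloor \log_8 n \rfloor$ satisfies $k \geq 3$ and $8^k \leq n < 8^{k+1}$.

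I would split $[8^k, 8^{k+1})$ into four subintervals dictated by the leading base-$8$ digit of $n$ and choose an optimal base on each. On $[8^k, 3 \cdot 8^k)$ take $r = 1$ with exponent $k$, obtaining $\gamma(I(1^{\times n})) \geq 2^k$. On $[3 \cdot 8^k, 4 \cdot 8^k) = [24 \cdot 8^{k-1}, 32 \cdot 8^{k-1})$ take $r = 24$ with exponent $k-1$, obtaining $\gamma \geq (23/18) \cdot 2^k$. On $[4 \cdot 8^k, 5 \cdot 8^k)$ take $r = 4$ with exponent $k$, obtaining $\gamma \geq (4/3) \cdot 2^k$. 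On $[5 \cdot 8^k, 8^{k+1})$ take $r = 5$ with exponent $k$, obtaining $\gamma \geq (5/3) \cdot 2^k$. In each case the hypothesis $n \geq r \cdot 8^{\text{exponent}}$ of Proposition \ref{8k} holds by construction and the exponent is $\geq 1$ because $k \geq 3$.

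The verification of $\gamma \geq (s+1)/3$ on each subinterval is a direct numerical computation. From $\binom{s}{3} < n$ one has $(s-2)^3 < s(s-1)(s-2) < 6n$, so substituting the upper endpoint of each subinterval reduces the four inequalities to $s+1 \leq 3 \cdot 2^k$, $s+1 \leq (23/6) \cdot 2^k$, $s+1 \leq 4 \cdot 2^k$, and $s+1 \leq 5 \cdot 2^k$ respectively. Each of these, taken contrapositively, is a cubic inequality in $2^k$; the tightest (the first) becomes $(3 \cdot 2^k - 2)^3 \geq 18 \cdot 8^k$, equivalently $9 \cdot 8^k - 54 \cdot 4^k + 36 \cdot 2^k - 8 \geq 0$, which already equals $1432$ at $k=3$ and grows with $k$.

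The main obstacle is the second subinterval $[3 \cdot 8^k, 4 \cdot 8^k)$: the base $r = 1$ with exponent $k$ yields only $2^k$, eventually too weak, while the bases $r \in \{4, 5\}$ with exponent $k$ would require $n \geq 4 \cdot 8^k$. Proposition \ref{gammas}(4) is precisely what plugs this hole, since $24 \cdot 8^{k-1} = 3 \cdot 8^k$ makes $r = 24$ with exponent $k-1$ applicable at the exact lower endpoint where it is needed. The cut-off $n \geq 512$ is convenient in that it guarantees $k \geq 3$, the smallest exponent at which all four cubic verifications hold with room to spare; the remaining range $n \leq 511$ is handled by the other propositions listed in Theorem \ref{thm1}.
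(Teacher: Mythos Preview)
Your argument is correct and follows the same strategy as the paper: bound $s$ above via $(s-2)^3 < 6n$, bound $\gamma(I(1^{\times n}))$ below by combining Proposition~\ref{8k} with the base values in Proposition~\ref{gammas}, and then verify the resulting numerical inequality on each piece of a subdivision of $[8^k,8^{k+1})$. The only cosmetic difference is that the paper uses three subintervals $[8^k,3\cdot 8^k]$, $[3\cdot 8^k,6\cdot 8^k]$, $[6\cdot 8^k,8\cdot 8^k]$ with bases $r=1,24,6$, whereas you use four subintervals with bases $r=1,24,4,5$; either partition works, and in both cases the first subinterval is the bottleneck forcing $k\geq 3$.
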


\begin{proof}
Take $s \geq 1$ such that
$$\binom{s}{3} < n \leq \binom{s+1}{3},$$
let
$$\widetilde{s} = \sqrt[3]{6n}+2.$$
Since
$$s(s-1)(s-2) \geq (s-2)^3$$
we have
$$\sqrt[3]{6n} \geq \sqrt[3]{6\binom{s}{3}} \geq s-2,$$
hence
$$\widetilde{s} \geq s.$$

Consider three cases:
\begin{itemize}
\item
$8^k \leq n \leq 3 \cdot 8^k$ for some $k \geq 0$. By Propositions \ref{Wc}, \ref{8k} and \ref{gammas} we have
$$\gamma(I(1^{\times n})) \geq \gamma(I(1^{\times 8^k})) \geq 2^k \gamma(I(1)) \geq 2^k.$$
By easy computations we check that $k \geq 3$ and
$$1 \geq \frac{\sqrt[3]{18}}{3}+\frac{1}{2^k},$$
hence
$$2^k \geq 2^k\frac{\sqrt[3]{18}}{3}+1 = \frac{\sqrt[3]{6 \cdot 3 \cdot 8^k}+3}{3} \geq \frac{\sqrt[3]{6n}+3}{3} = \frac{\widetilde{s}+1}{3} \geq \frac{s+1}{3},$$
which, by Proposition \ref{gamma}, completes the proof in this case.
\item
$3 \cdot 8^k \leq n \leq 6 \cdot 8^k$ for some $k\geq 0$. By Propositions \ref{Wc}, \ref{8k}, \ref{glueones} and \ref{gammas} we have
$$\gamma(I(1^{\times n})) \geq \gamma(I(1^{\times (3 \cdot 8^k)})) \geq 2^{k-1} \gamma(I(1^{\times 24})) \geq 2^k \frac{23}{18}.$$
By easy computations we check that
$$\frac{23}{18} \geq \frac{\sqrt[3]{36}}{3}+\frac{1}{2^k},$$
hence
$$2^k\frac{23}{18} \geq 2^k\frac{\sqrt[3]{36}}{3}+1 = \frac{\sqrt[3]{6 \cdot 6 \cdot 8^k}+3}{3} \geq \frac{\sqrt[3]{6n}+3}{3} = \frac{\widetilde{s}+1}{3} \geq \frac{s+1}{3},$$
which, by Proposition \ref{gamma}, completes the proof in this case.
\item
$6 \cdot 8^k \leq n \leq 8 \cdot 8^k$ for some $k\geq 0$. By Propositions \ref{Wc}, \ref{8k} and \ref{gammas} we have
$$\gamma(I(1^{\times n})) \geq \gamma(I(1^{\times (6 \cdot 8^k)})) \geq 2^k \gamma(I(1^{\times 6})) \geq 2^k \frac{5}{3}.$$
By easy computations we check that
$$\frac{5}{3} \geq \frac{\sqrt[3]{48}}{3}+\frac{1}{2^k},$$
hence
$$2^k\frac{5}{3} \geq 2^k\frac{\sqrt[3]{46}}{3}+1 = \frac{\sqrt[3]{6 \cdot 8 \cdot 8^k}+3}{3} \geq \frac{\sqrt[3]{6n}+3}{3} = \frac{\widetilde{s}+1}{3} \geq \frac{s+1}{3},$$
which, by Proposition \ref{gamma}, completes the proof in this case.
\end{itemize}
\end{proof}

\begin{theorem}
\label{p3low}
For $5 \leq n \leq 511$ and $I = I(1^{\times n})$ we have $I^{(3r)} \subset M^{2r}I^{r}$.
\end{theorem}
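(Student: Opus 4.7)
The plan is to follow the same template as Theorem \ref{p3high}: by Proposition \ref{gamma}, it suffices to prove $\gamma(I(1^{\times n})) \geq (s+1)/3$ for every $n$ with $5 \leq n \leq 511$, where $s$ is defined by $\binom{s}{3} < n \leq \binom{s+1}{3}$. Thanks to the monotonicity of $\gamma$ under containment (Proposition \ref{Wc}(2)), it is enough to settle the bound at the smallest $n$ in each $s$-range, so the task reduces to the twelve representative values $s \in \{4,5,\dots,15\}$.

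For each $s$ the strategy is to combine Proposition \ref{8k} with one of the base cases from Proposition \ref{gammas}. Concretely: $s=4$ uses Proposition \ref{gammas}(3) for $n=5,6,7$ and $(r,k)=(1,1)$ in Proposition \ref{8k} for $n \geq 8$; $s=5$ uses $(r,k)=(1,1)$ ($\gamma \geq 2$); $s=7,8,9$ use $(r,k)=(4,1),(5,1),(7,1)$ respectively (giving $\gamma \geq 8/3, 10/3, 10/3$); $s=10$ and $s=11$ share the choice $(r,k)=(15,1)$, where $\gamma(I(1^{\times 15})) \geq 2$ by containment from $n=8$, yielding $\gamma(I(1^{\times n})) \geq 4$ for every $n \geq 120$; $s=12$ uses $(r,k)=(24,1)$ combined with Proposition \ref{gammas}(4) ($\gamma \geq 46/9$); $s=13,14,15$ use $(r,k)=(4,2),(5,2),(7,2)$; and the $n \geq 24$ part of $s=6$ is handled by Proposition \ref{gammas}(4) directly. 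For each $(r,k,s)$ one verifies the elementary inequality $2^k\gamma(I(1^{\times r})) \geq (s+1)/3$.

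The only $n$ not captured by this uniform procedure are $n=21,22,23$ (with $s=6$, requiring $\gamma \geq 7/3$), since Proposition \ref{8k} with $k=1$ permits at most $r=2$ and produces only $\gamma \geq 2$. By monotonicity it is enough to handle $n=21$, and this is achieved by applying Theorem \ref{glueones} twice, first to $I(1^{\times 21}) = I(1^{\times 8},1^{\times 13})$ and then to $I(2,1^{\times 13}) = I(1^{\times 8},2,1^{\times 5})$:
$$\gamma(I(1^{\times 21})) \geq \gamma(I(2,1^{\times 13})) \geq \gamma(I(2,2,1^{\times 5})).$$
Containment and Proposition \ref{gammas}(6) then deliver $\gamma(I(2,2,1^{\times 5})) \geq \gamma(I(2,2,1,1,1)) \geq 7/3$, as required.

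Thus the proof is essentially an organised inventory of the machinery already developed in Propositions \ref{gammas}, \ref{8k}, and Theorem \ref{glueones}, plus containment, and no genuinely new base case is needed beyond what the earlier sections supply. The main obstacle is merely the bookkeeping: enumerating the twelve $s$-ranges and identifying, in each, a choice of $(r,k)$ (or the short two-step gluing detour in the exceptional case $s=6$) whose output meets the threshold $(s+1)/3$. I expect the $s=6$, $n=21$ case to be the only one whose handling is not completely automatic.
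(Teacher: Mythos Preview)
Your proposal is correct and follows essentially the same approach as the paper: reduce via Proposition~\ref{gamma} to the inequality $\gamma(I(1^{\times n})) \geq (s+1)/3$, then handle each $s$-range by combining Proposition~\ref{8k}, Theorem~\ref{glueones}, and the base cases of Proposition~\ref{gammas}. The particular $(r,k)$ choices differ in a few places but the structure is identical.

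Two small remarks. First, your separate detour for $n\in\{21,22,23\}$ is unnecessary: the paper simply uses $\gamma(I(1^{\times n})) \geq \gamma(I(1^{\times 20})) \geq \gamma(I(2,2,1^{\times 4})) \geq \gamma(I(2,2,1,1,1)) \geq 7/3$, gluing twice starting from $20$ rather than $21$ points, which covers all of $s=6$ at once. Second, your decision to split $s=7$ and $s=8$ is actually an improvement over the paper, which treats them together via $4\cdot 8$ points and states $2\gamma(I(1^{\times 4})) \geq 10/3$; in fact $2\gamma(I(1^{\times 4})) \geq 8/3$ only, which suffices for $s=7$ but not $s=8$, so your choice $(r,k)=(5,1)$ for $s=8$ (yielding $10/3$) is the correct fix.
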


\begin{proof}
Take $s$ satisfying
$$\binom{s}{3} < n \leq \binom{s+1}{3}.$$
We will consider the following cases, in each case using Propositions \ref{Wc}, \ref{8k}, \ref{gammas} and \ref{gamma}.
\begin{itemize}
\item
$s = 15$ or $s=14$, then $n \geq 365$. We have
$$\gamma(I) \geq \gamma(I(1^{\times (5\cdot 64)})) \geq 4\gamma(I(1^{\times 5})) \geq \frac{20}{3} \geq \frac{s+1}{3}.$$
\item
$s=13$ or $s=12$, then $n \geq 221$. We have
$$\gamma(I) \geq \gamma(I(1^{\times (3\cdot 64 + 3 \cdot 8)})) \geq 2\gamma(I(2,2,2,1,1,1)) \geq \frac{16}{3} \geq \frac{s+1}{3}.$$
\item
$s = 11$ or $s=10$ or $s=9$, then $n \geq 85$. We have
$$\gamma(I) \geq \gamma(I(1^{\times 64})) \geq 4\gamma(I(1)) \geq 4 \geq \frac{s+1}{3}.$$
\item
$s = 8$ or $s=7$, then $n \geq 36$. We have
$$\gamma(I) \geq \gamma(I(1^{\times (4\cdot 8)})) \geq 2\gamma(I(1^{\times 4})) \geq \frac{10}{3} \geq \frac{s+1}{3}.$$
\item
$s = 6$, then $n \geq 21$. We have
$$\gamma(I) \geq \gamma(I(1^{\times 20})) \geq \gamma(I(2,2,1,1,1)) \geq \frac{7}{3} \geq \frac{s+1}{3}.$$
\item
$s = 5$, then $n \geq 11$. We have
$$\gamma(I) \geq \gamma(I(1^{\times 8})) \geq 2\gamma(I(1)) \geq 2 \geq \frac{s+1}{3}.$$
\item
$s = 4$ then $n \geq 5$. We have
$$\gamma(I) \geq \gamma(I(1^{\times 5})) \geq \frac{5}{3} \geq \frac{s+1}{3}.$$
\end{itemize}

\end{proof}

\section{Results for $\PP^N$ and $n \leq N+1$}

\begin{proposition}
\label{funda}
Let $1 \leq n \leq N+1$, let $p_1=(1:0:\ldots:0)$, $p_2=(0:1:0:\ldots:0)$, $\ldots$ for $j=1,\dots,n$ be fundamental
points in $\PP^N$, let $I = I(1^{\times n})$, let $J = \cap_{j=1}^{n} \mathfrak{m}_{p_j}$. Let $r \geq 1$.
If $J^{(Nr)} \subset M^{(N-1)r}J^{r}$ then $I^{(Nr)} \subset M^{(N-1)r}I^{r}$.
\end{proposition}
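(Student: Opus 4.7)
The plan is to exploit the fact that for $n \leq N+1$ any two $n$-tuples of points in general position in $\PP^N$ are interchangeable by a projective linear automorphism, so $I$ and $J$ are related by a graded automorphism of $\field[\PP^N]$ that also preserves $M$. I would start by observing that the fundamental points $p_1,\dots,p_n$ are themselves in general position, since the first $n$ standard basis vectors of $\field^{N+1}$ are linearly independent; hence $J$ is a legitimate choice for the ideal of $n$ generic points. Given any other generic configuration $q_1,\dots,q_n$ defining $I$, I would construct a linear change of coordinates sending $p_j$ to $q_j$ for $j=1,\dots,n$: for $n \leq N$ one lifts the $q_j$ to linearly independent vectors in $\field^{N+1}$ and completes them to a basis, while for $n = N+1$ one rescales the lifts so that they form a basis. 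Either way there is an element of $\mathrm{GL}_{N+1}(\field)$ whose induced pullback is a graded $\field$-algebra automorphism $\Phi$ of $\field[\PP^N]$ satisfying $\Phi(\mathfrak{m}_{p_j}) = \mathfrak{m}_{q_j}$ for all $j$.

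The rest of the argument is formal. Since $\Phi$ is graded of degree zero, it sends linear forms to linear forms, so $\Phi(M) = M$. As a ring automorphism, $\Phi$ commutes with finite intersections, products, and localizations at associated primes, hence with the formation of ordinary powers, symbolic powers, and products of ideals. Therefore $\Phi(J) = I$, $\Phi(J^{r}) = I^{r}$, $\Phi(J^{(Nr)}) = I^{(Nr)}$, and $\Phi(M^{(N-1)r} J^{r}) = M^{(N-1)r} I^{r}$. Applying $\Phi$ to the assumed containment $J^{(Nr)} \subset M^{(N-1)r} J^{r}$ yields $I^{(Nr)} \subset M^{(N-1)r} I^{r}$, which is the desired conclusion.

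The only nontrivial point is the projective-geometry lemma producing $\Phi$, which is a classical fact about projective frames; once that is noted, the statement reduces to the observation that the containment conjectured by Harbourne--Huneke is preserved by any graded $\field$-algebra automorphism of $\field[\PP^N]$. I therefore do not expect any genuine obstacle, and the proof should be short.
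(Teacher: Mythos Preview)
Your proposal is correct and follows essentially the same approach as the paper: the paper's proof is simply ``Every sequence of $n \leq N+1$ general points in $\PP^{N}$ can be transformed by a linear automorphism into a sequence of $n$ fundamental points. Since composing with linear forms does not change the degree, the claim follows.'' You have spelled out exactly this argument in more detail, making explicit the construction of the graded automorphism $\Phi$ and why it commutes with symbolic powers, ordinary powers, and preserves $M$.
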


\begin{proof}
Every sequence of $n \leq N+1$ general points in $\PP^{N}$ can be transformed by a linear automorphism into
a sequence of $n$ fundamental points. Since composing with linear forms does not change the degree, the claim follows.
\end{proof}

We will now consider only ideals for a sequence of fundamental points. We can explicitly write down which
elements belongs to such an ideal. For a sequence $(a_0,\dots,a_N)$ of $N+1$ nonnegative integers let
$$x^{(a_0,\dots,a_N)} = x_0^{a_0} \cdot \ldots \cdot x_N^{a_N} \in \field[\PP^3].$$

\begin{proposition}
\label{gener}
Let $I$ be an ideal of $n \leq N+1$ fundamental points, let $m \geq 1$. Then $(I^{(m)})_{t}$ is generated by the following set of monomials:
$$\mathcal{M}=\mathcal{M}(n,m)_t = \bigg\{ x^{(a_0,\dots, a_N)} : \sum_{j=0}^{N} a_{j} = t, \, a_k \leq t-m \text{ for } k=0,\dots,n-1 \bigg\}.$$
\end{proposition}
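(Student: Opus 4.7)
The plan is to recognize $I^{(m)}$ as a monomial ideal and then read off its degree-$t$ monomial basis directly from the defining intersection.

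First I would make each $\mathfrak{m}_{p_j}$ explicit. Because $p_j=(0{:}\cdots{:}0{:}1{:}0{:}\cdots{:}0)$ with the $1$ in slot $j-1$, the ideal $\mathfrak{m}_{p_j}$ equals $(x_i : i \neq j-1)$, generated by a subset of the variables. In particular $\mathfrak{m}_{p_j}$ is a monomial ideal, hence so is its $m$-th power $\mathfrak{m}_{p_j}^m$; the latter consists precisely of those monomials $x^{(a_0,\dots,a_N)}$ whose total exponent on the variables $x_i$ with $i \neq j-1$ is at least $m$.

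Next I would invoke the identity
\[
I^{(m)} \;=\; \bigcap_{j=1}^{n} \mathfrak{m}_{p_j}^{m}
\]
recorded in the introduction. Since a finite intersection of monomial ideals is again monomial, $I^{(m)}$ is a monomial ideal, and consequently each graded component $(I^{(m)})_t$ has a $\field$-basis consisting of the degree-$t$ monomials it contains. A monomial $x^{(a_0,\dots,a_N)}$ of degree $t$ lies in $\mathfrak{m}_{p_j}^m$ iff $t-a_{j-1}\geq m$, i.e.\ $a_{j-1}\leq t-m$; intersecting over $j=1,\dots,n$ yields exactly the constraints defining $\mathcal{M}(n,m)_t$.

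I do not expect a real obstacle: the proposition is essentially an unpacking of definitions, requiring only that ideals of fundamental points are generated by subsets of the variable set and that the monomial property is preserved under taking powers and finite intersections.
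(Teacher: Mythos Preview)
Your proposal is correct and follows essentially the same route as the paper: identify each $\mathfrak{m}_{p_j}$ as the monomial ideal generated by the variables other than $x_{j-1}$, describe $\mathfrak{m}_{p_j}^m$ by the exponent condition $\sum_{i\neq j-1}a_i\geq m$ (equivalently $a_{j-1}\leq t-m$ in degree $t$), and then use that a finite intersection of monomial ideals is monomial to read off the basis of $(I^{(m)})_t$. The paper isolates this last fact as a separate lemma, but otherwise the arguments coincide.
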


\begin{proof}
Observe that for $(a_0,\dots, a_N)$ satisfying $\sum_{j=0}^{N} a_j = t$ the following are equivalent for each $k$:
\begin{gather}
a_k \leq t-m,\\
\label{eqm}
\sum_{j=0; j \neq k}^{N} a_j \geq m.
\end{gather}
Observe that
$$\mathfrak{m}_{(1:0:\ldots:0)} = (x_1,\dots,x_N),$$
hence
$$\mathfrak{m}_{(1:0:\ldots:0)}^{m} = \bigg( \big\{ x^{(a_0,\dots, a_N)} : \sum_{j=1}^{N} a_j \geq m \big\} \bigg),$$
which is exactly \eqref{eqm} for $k=0$.
Similar inequalities holds for other fundamental points. The proof is completed by Lemma \ref{monomcap}.
\end{proof}

\begin{lemma}
\label{monomcap}
Let $I, J$ be monomial ideals. Then 
$$I \cap J = ( \{ p \in I \cap J : p \text{ is a monomial}\}).$$
\end{lemma}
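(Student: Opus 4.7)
The plan is to reduce everything to the standard characterization of monomial ideals: an ideal $M$ generated by monomials has the property that a polynomial $f$ lies in $M$ if and only if every monomial term appearing in $f$ (with nonzero coefficient) lies in $M$. This will let us intersect ``termwise''.

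First I would verify this characterization briefly. Write $M=(m_1,\dots,m_s)$ with each $m_i$ a monomial. If $f=\sum_i g_i m_i \in M$, then expanding each $g_i$ as a $\field$-linear combination of monomials and distributing, $f$ becomes a $\field$-linear combination of monomial multiples of the $m_i$'s. Since a monomial times a monomial is a monomial, collecting like terms shows that every monomial appearing in $f$ is of the form $(\text{monomial})\cdot m_i$ for some $i$, hence lies in $M$. The converse is immediate because $M$ is an ideal.

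Given this, the lemma is straightforward. Let $f \in I \cap J$ and write
$$f = \sum_{\alpha} c_\alpha x^{\alpha},$$
its decomposition into monomial terms with $c_\alpha \in \field^{*}$. Applying the characterization to $I$, each $x^{\alpha} \in I$; applying it to $J$, each $x^{\alpha} \in J$; hence each $x^{\alpha} \in I \cap J$. Therefore $f$ is a $\field$-linear combination of monomials lying in $I \cap J$, so
$$f \in \bigl(\{p \in I \cap J : p \text{ is a monomial}\}\bigr).$$
The reverse inclusion is trivial since every generator on the right-hand side is by construction an element of the ideal $I \cap J$.

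There is no real obstacle here: the entire content is the termwise characterization of monomial ideals, and once that is in hand, intersection commutes with the passage to monomial terms. I would present it in exactly two short paragraphs: one establishing the termwise characterization, one intersecting the two ideals term by term.
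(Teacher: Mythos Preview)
Your proof is correct and is essentially the same argument as the paper's: both reduce to the fact that the monomial terms of an element of a monomial ideal already lie in the ideal, so the terms of any $f\in I\cap J$ lie in both $I$ and $J$. The paper expresses this by writing $f=\sum c_j p_j=\sum d_k r_k$ with $p_j\in I$, $r_k\in J$ monomials and then matching $p_j=r_j$; your version states the termwise characterization explicitly and is the cleaner presentation of the same idea.
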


\begin{proof}
Let $f \in I \cap J$. We have
$$f = \sum c_j p_j = \sum d_k r_k,$$
where $p_j$'s are monomials from $I$ and $r_k$'s are monomials from $J$. It follows that, after reordering
$r_k$ if necessary, $p_j=r_j$ for all $j$'s and the claim follows.
\end{proof}

\begin{proposition}
\label{nlow}
Let $I$ be the ideal of $n$, $1 \leq n \leq N+1$, fundamental points in $\PP^{N}$, $N \geq 2$. Then $I^{(Nr)} \subset M^{(N-1)r}I^{r}$ for all $r \geq 1$.
\end{proposition}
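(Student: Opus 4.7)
The plan is to invoke Proposition~\ref{gener} and check the containment monomial by monomial. Fix a monomial $x^{(a_0,\dots,a_N)}$ in $(I^{(Nr)})_t$, so $t = \sum_j a_j$ and $a_k \leq t - Nr$ for $k = 0,\dots,n-1$. Set $T = \{0,\dots,n-1\}$ and $U = \{n,\dots,N\}$. Applying Proposition~\ref{gener} with $m=1$ shows that the minimal monomial generators of $I$ are the variables $x_j$ for $j \in U$ together with the products $x_i x_j$ for distinct $i,j \in T$; call these \emph{degree-$1$} and \emph{degree-$2$} generators. It is enough to find an exponent vector $b \leq a$ (coordinatewise) such that $x^b$ is a product of $r$ generators of $I$ and $|b| \leq t - (N-1)r$, for then $x^a = x^b \cdot x^{a-b} \in I^r M^{(N-1)r}$.

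I would look for $b$ arising from $r - c$ degree-$1$ generators and $c$ degree-$2$ generators, for some $0 \leq c \leq r$, so that $\sum_{j \in U} b_j = r - c$, $\sum_{j \in T} b_j = 2c$ and $\max_{j \in T} b_j \leq c$ (the last condition encoding realizability of $(b_j)_{j \in T}$ as the degree sequence of a loopless multigraph on $T$ with $c$ edges). Since $|b| = r + c$, the degree condition reads $c \leq t - Nr$. Set
$$
c := \max\bigl\{0,\; r - \textstyle\sum_{j \in U} a_j\bigr\},
$$
the minimum $c$ consistent with $\sum_{j \in U} b_j \leq \sum_{j \in U} a_j$. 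Then $\sum_{j \in U} a_j \geq r - c$ holds automatically, and three conditions remain to verify:
\begin{itemize}
\item[(a)] $c \leq t - Nr$;
\item[(b)] $\sum_{j \in T} \min(a_j, c) \geq 2c$ (so a valid $(b_j)_{j \in T}$ exists);
\item[(c)] if $n = 1$, then $c = 0$ (no edges on a single vertex).
\end{itemize}

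The key obstacle is (a), which is where the hypothesis $n \leq N + 1$ enters. If $t \geq (N+1)r$ it is trivial; otherwise, when $c > 0$, (a) becomes $\sum_{j \in U} a_j \geq (N+1)r - t$, and the inequality $\sum_{j \in U} a_j \geq (1-n)t + nNr$ obtained from $a_k \leq t - Nr$ on $T$ reduces it to $(n-2)t \leq (nN - N - 1)r$. The cases $n = 1, 2$ are immediate (the first using $t \geq Nr \geq r$); for $3 \leq n \leq N + 1$, one combines $t < (N+1)r$ with the elementary inequality $(n-2)(N+1) \leq nN - N - 1$, which is equivalent to $n \leq N + 1$. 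For (b), if at least two indices $j \in T$ satisfy $a_j > c$ the bound $\sum_{j \in T} \min(a_j, c) \geq 2c$ is trivial; otherwise a direct estimate using $a_j \leq t - Nr \leq t - r$ and $\sum_{j \in T} a_j = t - (r - c)$ works. For (c), $c = \max\{0, r - (t - a_0)\} = 0$ because $t - a_0 \geq Nr \geq r$.

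With (a)--(c) verified, pick any $(b_j)_{j \in U}$ with $b_j \leq a_j$ summing to $r - c$, and any $(b_j)_{j \in T}$ with $b_j \leq \min(a_j, c)$ summing to $2c$; the $T$-part is then realizable as the degree sequence of a loopless multigraph with $c$ edges, since its maximum is at most $c = \tfrac{1}{2}\sum_{j \in T} b_j$. The resulting $x^b$ is a product of $r$ generators of $I$, and $x^a = x^b \cdot x^{a-b} \in M^{(N-1)r} I^r$.
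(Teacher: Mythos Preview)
Your proof is correct and follows essentially the same route as the paper: reduce to monomials via Proposition~\ref{gener}, split indices into $T=\{0,\dots,n-1\}$ and $U=\{n,\dots,N\}$, and write each monomial as a product of $r$ generators (single variables from $U$, quadratic products from $T$) times a high-degree remainder, with the decisive inequality $(n-2)(N+1)\le nN-N-1$ encoding the hypothesis $n\le N+1$. Your parameter $c=\max\{0,\,r-\sum_{j\in U}a_j\}$ is exactly the paper's $r-p$ in the case $p<r$ and $0$ otherwise, so the case split is the same.

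One point worth noting: you are more careful than the paper on a small issue. The degree-$2$ generators must be $x_ix_j$ with $i\neq j$ in $T$ (since $x_i^2\notin I$), so the $2c$ variables chosen from $T$ must be pairable into distinct-index pairs. The paper only checks the total count $2(r-p)\le s$ and does not verify this pairing, whereas your condition~(b) together with the multigraph-degree-sequence remark closes this gap. The verification of~(b) you sketch is indeed routine: for $c>0$ one has $\sum_{j\in T}a_j=t-r+c$, and if at most one $a_{j_0}$ exceeds $c$ then $\sum_{j\in T}\min(a_j,c)\ge c+(s-a_{j_0})\ge c+(N-1)r+c\ge 2c$ using $a_{j_0}\le t-Nr$, while if none does the sum is $s=t-r+c\ge 2c$ since $t\ge Nr\ge 2r$.
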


\begin{proof}
For $n=1$ this is obvious since then $I^{(Nr)} = I^{Nr}$, so let us assume $n \geq 2$.

Choose $t \geq 0$, by Proposition \ref{gener} it is sufficient to show that every element from $\mathcal{M}=\mathcal{M}(n,Nr)_t$ belongs to
$M^{(N-1)r}I^r$. Let $x^{(a_0,\dots,a_N)} \in \mathcal{M}$, of course $t \geq Nr$. Our aim is to show that
$$x^{(a_0,\dots,a_N)} = y \cdot z,$$
where $y$ is a product of $r$ monomials, each of them belonging to $I$,
and $z$ is a monomial of degree at least $(N-1)r$.
Observe that, by Proposition \ref{gener}, $x_j \in I$ for $j \geq n$, while for other indeterminates we have
$$x_jx_k \in I \text{ for } j < n, k < n.$$
Hence $y$ should be equal to the product of $r$ factors, each of them being either a single indeterminate
$x_j$ for $j \geq n$, or a product of two indeterminates $x_jx_k$ for $j,k < n$.
Let
$$s = \sum_{j=0}^{n-1} a_j, \qquad p = \sum_{j=n}^{N} a_j.$$
If $p \geq r$ then $y$ can be taken to be a product of exactly $r$ indeterminates, so $\deg y=r$.
Taking $z=x^{(a_0,\dots,a_N)}/y$ we obtain
$$\deg z = t-r \geq Nr-r = (N-1)r,$$
hence $z \in M^{(N-1)r}$.
Now consider the case where $p<r$. Take $p$ single indeterminates of the form $x_j$ for $j \geq n$ and
$2(r-p)$ indetermines of the form $x_j$ for $j < n$ in such a way that their product $y$ divides $x^{(a_0,\dots,a_N)}$.
It is possible since
$$2(r-p)=2r-2p \leq Nr-p \leq t-p=s.$$
Thus $y \in I^r$, let $z=x^{(a_0,\dots,a_N)}/y$, we have
$$\deg(z)=s-2(r-p).$$
If $t \geq (N+1)r$ then
$$s-2(r-p)=s+p-2r+p \geq t-2r \geq (N+1)r-2r = (N-1)r$$
and consequently $z \in M^{(N-1)r}$.
So now assume that
$$t < (N+1)r.$$
From $n-1 \leq N$ it follows that
$$nN+n-2N-2 \leq nN-N-1$$
and hence
$$t(n-2) \leq (n-2)(N+1)r \leq (nN-N-1)r.$$
The above can be reformulated to
$$2t \geq (N+1)r+n(t-rN).$$
We also know that $x^{(a_0,\dots,a_N)} \in \mathcal{M}$, hence
$$s = \sum_{j=0}^{n-1} a_j \leq n(t-rN)$$
and
$$(N+1)r+n(t-rN) \geq (N+1)r+s.$$
The inequality $2t \geq (N+1)r+s$ can be reformulated to
$$2(p+s)-2r-s\geq (N-1)r$$
which is equivalent to
$$s-2(r-p) \geq (N-1)r,$$
which completes the proof.
\end{proof}


\begin{thebibliography}{Boc--Har 10a}
%
\bibitem[Boc--Har 10a]{BoHa1}
C. Bocci, B. Harbourne,
\textit{Comparing Powers and Symbolic Powers of Ideals},
J. Algebraic Geometry 19, 399--417 (2010), arXiv:0706.3707v2 [math.AG].
%
\bibitem[Boc--Har 10b]{BoHa2}
C. Bocci, B. Harbourne,
\textit{The resurgence of ideals of points and the containment problem},
Proc. Amer. Math. Soc., 138 (4), 1175--1190 (2010), arXiv:0906.4478v1 [math.AG].
%
\bibitem[Chud 81]{Ch}
G.V. Chudnovsky,
\textit{Singular points on complex hypersurfaces and multidimensional Schwarz Lemma},
S\,eminaire de Th\,eorie des Nombres, Paris 1979--80, S\'eminaire Delange-Pisot-Poitou, Progress
in Math. vol. 12, M-J Bertin (editor), Birkh\:auser, Boston-Basel-Stuttgart (1981).
%
\bibitem[Dum 09]{Dumalg}
M. Dumnicki,
\textit{An algorithm to bound the regularity and nonemptiness of linear systems in $\mathbb P^n$},
J. Symb. Comp. 44, 1448--1462 (2009),
arXiv:0802.0925v1 [math.AG].
%
\bibitem[Har--Hun 11]{HaHu}
B. Harbourne, C. Huneke,
\textit{Are symbolic powers highly evolved?},
arXiv:1103.5809v1 [math.AG] (2011).

\end{thebibliography}
\end{document}